\renewcommand{\epsilon}{\varepsilon}
\newcommand{\R}{{\mathbb R}}
\newcommand{\Z}{{\mathbb Z}}
\renewcommand{\phi}{\varphi}
\newcommand{\gcal}{\mathcal{G}}
\numberwithin{equation}{subsection}
\newtheorem{theo}[equation]{{\sc Theorem}}
\newtheorem{lem}[equation]{{\sc Lemma}}
\newtheorem{prop}[equation]{{\sc Proposition}}
\theoremstyle{definition}
\newtheorem{defn}[equation]{{\sc Definition}}
\theoremstyle{remark}
\newtheorem{rem}[equation]{Remark}
\title{Smooth billiards with a large Weyl remainder}
\author{Suresh Eswarathasan, Iosif Polterovich and John A.~Toth}
\address{Department of Mathematics and
Statistics, McGill University, 805 Sherbrooke Str. West,
Montr\'eal QC H3A 2K6, Ca\-na\-da.}
\address{Centre de Recherches Mathematiques, Univer\-sit\'e de Mont\-r\'eal, CP 6128 succ.
Centre-Ville, Mont\-r\'eal QC  H3C 3J7, Canada.}
\email{suresh@math.mcgill.ca \\ eswarath@crm.umontreal.ca}
\address{D\'e\-par\-te\-ment de math\'ematiques et de
sta\-tistique, Univer\-sit\'e de Mont\-r\'eal, CP 6128 succ.
Centre-Ville, Mont\-r\'eal QC  H3C 3J7, Canada.}
\email{iossif@dms.umontreal.ca}
\address{Department of Mathematics and
Statistics, McGill University, 805 Sherbrooke Str. West,
Montr\'eal QC H3A 2K6, Ca\-na\-da.}
\email{jtoth@math.mcgill.ca} 
\date{}
\begin{document}
\begin{abstract}
The celebrated Hardy--Landau lower bound for the error term in the Gauss's circle problem can be viewed as an estimate from below 
for the remainder in Weyl's law on a square,  with either Dirichlet or Neumann boundary conditions.  
We prove an analogous estimate for smooth star-shaped planar domains admitting an appropriate one-parameter family  of periodic billiard trajectories.  
Examples include ellipses and smooth domains of constant width. 
Our results confirm a prediction of P.~Sarnak who proved a similar statement  for surfaces without boundary.  We also obtain lower bounds on the error term  in higher dimensions. In this case, the main contribution to the Weyl remainder typically comes from the ``big'' singularity at zero of the wave trace.  However, for certain domains, such as the Euclidean ball, the dimension of the family of periodic trajectories is large enough to  dominate the contribution of the singularity at zero.

\end{abstract}
\maketitle
\section{Introduction and main results}
\subsection{Weyl's law}
Let $\Omega \subset \R^n$ be a smooth bounded Euclidean domain. 
Denote by  $\{\phi_{\lambda_j}\}_{j \in \mathbb{N}}$ the  $L^2$-normalized eigenfunctions of the Laplacian on $\Omega$ with either Dirichlet or Neumann boundary conditions, and let $\lambda_j^2$ be the corresponding eigenvalues.
 
Denote by $ N(\lambda)= \# \{ \lambda_j \leq \lambda \}$  the spectral counting function. 
The {\it Weyl remainder} $R(\lambda)$ is defined by the following equality:
\begin{equation} \label{2term}
N(\lambda) = \frac{\text{vol}(\Omega)}{(4 \pi)^{\frac{n}{2}} \Gamma\left(\frac{n}{2}+1\right)} \lambda^n \pm \frac{\text{vol}(\partial \Omega)}{2^{n+1} \pi^{\frac{n-1}{2}}\Gamma\left(\frac{n+1}{2}\right)} \lambda^{n-1} + R(\lambda),
\end{equation}
where $"+"$  in front of the second term corresponds to Neumann boundary conditions and $"-"$ to Dirichlet boundary conditions.
According to Weyl's conjecture, 
 \begin{equation}\label{2term2}
R(\lambda) = o(\lambda^{n-1}) \,\,\, \text{as} \,\,\, \lambda \rightarrow \infty, \end{equation}
and therefore \eqref{2term} is a two-term asymptotic formula. 
Weyl's conjecture was proved in \cite{Ivr} under the nonperiodicity condition: periodic trajectories of the billiard flow  $G^t:S^*\Omega \rightarrow S^*\Omega$ form a  set of measure zero in the unit cotangent bundle $S^*\Omega$. It was conjectured by Ivrii  that the non-periodicity condition holds for all Euclidean billiards. We refer to \cite{Vas, Vas1, PS, SV} for the proofs of this conjecture in some special cases and further discussion.

The remainder $R(\lambda)$ is known to  depend on dynamical properties of the billiard  flow. In the present paper we focus on lower bounds for
$R(\lambda)$. For manifolds without boundary, related results were obtained in \cite{S, Kar, JP, JPT}.  Let us also mention the celebrated Hardy--Landau lower bound for the error term in the Gauss's circle problem.  It can be  viewed as a lower bound on the Weyl remainder for a flat square torus, as well as  for a square with either Neumann or Dirichlet boundary conditions.
\subsection{Main result}
\label{sec:main}
It is well known that the growth of the error term is closely related to the singularities of the wave trace $\sum_{j=1}^{\infty} e^{-i\lambda_j t}$ when understood as a distribution. The wave trace has a ``big'' singularity at $t=0$, 
and other singularities at $\pm t \in \mathcal{L}(\Omega)$, where $\mathcal{L}(\Omega)$ denotes the length spectrum of $\Omega$, i.e. the set of lengths of closed billiard trajectories.  In any dimension, the singularity at zero produces  the two main terms in Weyl's law.  In two dimensions, the influence of the singularity at zero on the error term $R(\lambda)$ in the two-term asymptotics is insignificant, and therefore in order to estimate the remainder from below one has to study  the contribution of the 
periodic orbits.  This makes the two-dimensional case the most interesting and difficult.  
\subsubsection*{Notation}
Throughout the paper we use two different representations of the billiard flow.  On one hand, we view it  as a map on $S^*\Omega$, the unit cotangent bundle of $\Omega$. On the other hand, we consider the {\it billiard ball map} $\beta$  (see, for instance, \cite{GM2})  on $B^*\partial \Omega$, the co-ball bundle of the boundary.  We set $B^* \partial \Omega = \{ (q,\xi) \in T^* \partial \Omega;   |\xi| < 1 \}$ and let $\pi: B^* \partial \Omega \rightarrow \partial \Omega$ be the canonical projection map $(q,\xi) \mapsto q.$  We note that $B^*\partial \Omega$ is the open co-ball bundle excluding the glancing set $S^*\partial \Omega.$ The co-ball bundle can be identified with $S_{\partial \Omega}^{+}(\Omega)$ (the superscript $+$ stands for directions pointing inside $\Omega$) using the map 
\begin{equation}
\label{zetaproj}
\zeta: B^* \partial \Omega \rightarrow S_{\partial \Omega}^+ \Omega,
\end{equation}
 given by  $\zeta(q,\xi) = (q, \xi + \sqrt{1-|\xi|^2} \nu_q^+)$, where $\nu_q^+$ denotes the internal unit normal to $\partial \Omega$ at the point $q$.   Further on, maps and functions on the lifted space $S_{\partial \Omega}^*\Omega$  and the unit tangent bundle $S^*\Omega$ will be distinguished from the corresponding objects on $B^*\partial \Omega$ by the addition of  a tilde~$\tilde \, .$ In particular, if $\tilde{U} \subset S_{\partial \Omega}^+\Omega$,  the set $U \subset B^*\partial \Omega$ is defined by $\zeta^{-1}(\tilde{U})=U$.
 
An element $(q,\eta) \in S_{\partial \Omega}^*\Omega$ uniquely determines a billiard trajectory emanating from the point $q \in \partial \Omega$ in the direction $\eta$.  Given a set $\tilde{\Lambda}_{\partial\Omega} \subset S_{\partial \Omega}^*\Omega$ we denote by  
$\tilde{\Lambda} \subset S^*\Omega$ a family of billiard trajectories with initial data in $\tilde{\Lambda}_{\partial\Omega}$.  

\smallskip
Recall the definition of a clean (in the sense of Bott) fixed point set for a diffeomorphism $\Phi$ (see \cite{DG}).
\begin{defn}
\label{clean}
Let $M$ be a manifold and $\Phi: M \rightarrow M$ be a diffeomorphism.  A submanifold $Z \subset M$ of fixed points of $\Phi$ is called {\em clean} if for each $z \in Z$, the kernel of the map $(d \Phi_z - I): T_z M \rightarrow T_z M$ equals $T_z Z$.
\end{defn}

In order to formulate our main result we need the following definition.

\begin{defn} 
\label{microclean} 
Assume that the billiard ball map $\beta$ on a smooth bounded planar domain $\Omega$ possesses a smooth invariant circle $\Lambda_T  \subset B^*\partial \Omega$  consisting of the fixed points of $\beta^k$ for some given $ k\ge 2$.  Let $\tilde{\Lambda}_T \subset S^*\Omega$ be the corresponding  family of periodic billiard trajectories of length $T$. We say that this family  is {\it admissible} if the following properties are  satisfied:

\smallskip

\noindent (i) {\it (isolation in the length spectrum)}   There exists $\epsilon >0$ with the property that there are no periodic billiard trajectories
of period $t\in[T-\epsilon, T+\epsilon]$ other than those in $\tilde{\Lambda}_T \cup \tilde{\Lambda}_T^{-1}$. Here  $\tilde{\Lambda}_T^{-1}$  denotes 
the family of  the same trajectories as in $\tilde{\Lambda}_T$ but traversed in the opposite direction; in other words,    $\tilde{\Lambda}_T$ is mapped into $\tilde{\Lambda}_T^{-1}$ by the  isomorphism of $S^*\Omega$ given by $(x,\eta) \to (x,-\eta)$.

\smallskip

\noindent (ii)  {\it (cleanliness)} The set  $\Lambda_T \subset B^*\partial\Omega$ is a clean submanifold of the fixed points of~$\beta^k$. 

\smallskip

\noindent (iii) {\it (separation from the glancing set)}  The set  $\Lambda_T$ lies in the interior of $B^*\partial \Omega$.
\end{defn}
Condition (i) is the most difficult one to check for particular domains. It is used  to separate the contribution of $\Lambda_T$ to the growth of the remainder $R(\lambda)$  from the contributions of all the other periodic orbits.  Note that, a priori, contributions of different periodic  orbits may cancel each other, particularly due to the presence of the Maslov factors (cf.  Remark \ref{remark:Maslov}). Condition (i) could be probably made less restrictive, but in the present form it  is sufficient  to make sure that the contribution of $\Lambda_T$ is  not cancelled out.
 
Condition (ii) is standard (see \cite{DG});  it  is necessary in order  to apply the stationary phase argument.  

Condition (iii)  means  that the invariant circle $\Lambda_T$ stays away from the glancing set.  This is needed to construct a microlocal parametrix of the Dirichlet wave operator (see section~\ref{multilink}). It also ensures  that the billiard ball map  $\beta$ is smooth on $\Lambda_T$. Note that for nonconvex domains, the billiard ball map  $\beta$ has discontinuities due to the existence of trajectories touching the boundary from inside (see \cite{Tab}).

The result below was  predicted by  P. Sarnak in 1995 and  can be viewed as  an analogue of \cite[Proposition 3.1]{S} for planar domains. Given $\alpha >0$, we  use the standard notation $f(\lambda) \gg~\lambda^{\alpha}$, if there exist positive constants $\lambda_0$ and $C$ such that $f(\lambda) \ge C\,\lambda^{\alpha}$ for any
$\lambda>\lambda_0$.
\begin{theo} \label{mainthm}
Let $\Omega$ be a bounded, star--shaped  planar domain with  smooth boundary,  having  an admissible 
family of periodic billiard trajectories in the sense of Definition \ref{microclean}. Then the Weyl remainder for either Dirichlet or Neumann boundary conditions satisfies
\begin{equation}
\label{main:bound}
\frac{1}{\lambda}\int_{\lambda}^{2 \lambda} |R(\tau)| \ d\tau \gg \sqrt{\lambda}.
\end{equation}
\end{theo}
The existence of a family of periodic  billiard trajectories of the same length is not a generic assumption (see \cite{PS}).  At the same time, it is  satisfied for a number of  interesting examples, including   ellipses and smooth domains of constant width. There are many other billiards with one-parameter families of closed trajectories (see \cite{BZ} and references therein) for which Theorem \ref{mainthm} would hold provided the admissibility conditions (particularly, isolation in the length spectrum) are satisfied.

\begin{rem}
 On any smooth strictly convex billiard,    there exist  caustics, corresponding to the invariant circles of the billiard ball map (see \cite{Laz} for some related spectral results). However, these invariant circles  typically have irrational rotation numbers and do not give rise to a family of periodic  orbits. It is not clear whether they are  relevant  for  the study of the Weyl remainder. 
\end{rem}

\begin{rem}
\label{Nostadium}
Note that partially rectangular domains (including the Bunimovich stadium, for which \eqref{main:bound} was originally conjectured by Sarnak)  are not covered by Theorem \ref{mainthm}. The difficulties arising in their study are twofold. First, note that the fixed point set corresponding to the bouncing ball trajectories is not a submanifold of $B^*\partial\Omega$; it has a boundary corresponding to the endpoints of the flats, and therefore the standard clean intersection calculus does not work. Second, most partially rectangular domains are not smooth. For example, the  boundary of the Bunimovich stadium is  $C^{1,1}$ at the points where the circular arcs meet the flat parts. Despite the fact that these singularities are rather  mild,  the wave equation approach seems hard to implement. This is due to diffraction of singularities associated with incident waves hitting corner points and the fact that the bouncing-ball flats $\Lambda_T$ pass through the corners at the interface between  the rectangular part and the semicircular wings. Consequently, instead of attempting to show that the  conormal expansions for $E^{b}(t)$ at $t=T$ are still valid, the stationary Balian-Bloch approach for the Dirichlet (or Neumann) resolvent $(-\Delta_{\Omega} - (\lambda + i0)^{2} )^{-1}$ could  be used to derive these expansions. 
This program will be implemented  in the subsequent paper \cite{PT}.
\end{rem}

\begin{rem}
\label{noweyl}
It is not assumed in Theorem \ref{mainthm} (and in Theorem \ref{higher} below) that the Weyl remainder satisfies the  upper bound \eqref{2term2} (we note that $R(\lambda) \neq o(\lambda)$ does not automatically imply \eqref{main:bound} because the latter is an estimate on average).
The nonperiodicity condition is known to be true for convex analytic domains \cite{Vas, SV},
and hence \eqref{2term2} holds for ellipses and analytic domains of constant width (existence of such domains has been shown in \cite{Weg}).
\end{rem}

\subsection{Higher dimensions}
\label{higher:sec1}
 Let us now assume that $\Omega$ is a Euclidean domain of dimension three or higher. In this case, a  lower bound on the error term could be proved using the contribution of the singularity at zero of the wave trace. In particular, we have the following
\begin{theo}
\label{higher}
Let $\Omega \subset \mathbb{R}^n$, $n\ge 3$ be a Euclidean domain with smooth boundary. Suppose that the total mean curvature of the boundary 
$\int_{\partial \Omega} H \,  \neq 0$. Then  the Weyl remainder for either Dirichlet or Neumann boundary conditions satisfies
\begin{equation}
\label{higher:bound}
\frac{1}{\lambda}\int_{\lambda}^{2 \lambda} |R(\tau)| \ d\tau \gg\lambda^{n-2}.
\end{equation}
\end{theo}
Theorem \ref{higher} is a corollary of the more general Proposition \ref{Riesz} proved in Section \ref{higher:sec}.
Note that, unlike the assumptions of Theorem \ref{mainthm}, the condition $\int_{\partial \Omega} H \,  \neq 0$ is satisfied generically.
In particular, it holds for any convex domain $\Omega$. 

The lower bound \eqref{higher:bound} could be  generalized to Riemannian manifolds with boundaries, see Proposition \ref{Riesz}. 
It can be also extended to some Euclidean domains with nonsmooth boundaries.
For instance, it holds for generic polyhedra, including all convex ones (see Remark \ref{Fed}). 
Estimates  on the remainder in the lattice-counting problem in  higher-dimensional bodies \cite{Wal, BG} show that   in dimensions 
$n\ge 5$, the bound  \eqref{higher:bound} is sharp and is saturated for rectangular parallelepipeds.

An analogue of Theorem \ref{mainthm} also  holds in dimensions $n\ge 3$. Using \eqref{pretrace} and following the same argument as in the proof of Theorem \ref{mainthm}, one could show:
\begin{equation}
\label{dyn:higher}
\frac{1}{\lambda}\int_{\lambda}^{2 \lambda} |R(\tau)| \ d\tau \gg \ \lambda^{d/2},
\end{equation}
where  $d$ is the dimension of  an  admissible compact submanifold of $B^*\partial \Omega$ corresponding to the periodic billiard trajectories of the same length (note that 
in \eqref{main:bound} one has $d=\dim \Lambda_T=1$).
According to the nonperiodicity conjecture, $d/2<n-1$,  and hence $d\le 2n-3$.  If this inequality is strict, estimate  \eqref{dyn:higher} does not give anything new compared to \eqref{higher:bound}. For instance, the bouncing ball orbits in a rectangular parallelepiped form a fixed point set of dimension $d=n-1$, and therefore
\eqref{dyn:higher}  coincides with \eqref{higher:bound} if $n=3$ and is weaker for $n \ge 4$.
If the equality $d=2n-3$ holds,  estimate \eqref{dyn:higher} gives a better bound than \eqref{higher:bound}. As will be shown in  subsection \ref{balls}, 
this is the case for Euclidean balls, and therefore the following result holds:
\begin{prop}
\label{ball:bound}
The Weyl remainder  $R(\lambda)$  on a $n$-dimensional ball  $\mathbb{B}^n \subset \mathbb{R}^n$ satisfies
\begin{equation*}
\frac{1}{\lambda}\int_{\lambda}^{2 \lambda} |R(\tau)| \ d\tau \gg \ \lambda^{n-\frac{3}{2}}, \quad n=1,2,\dots
\end{equation*}
\end{prop}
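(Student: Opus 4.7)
My plan is to reduce the estimate to the dynamical lower bound \eqref{dyn:higher}, by exhibiting inside $\mathbb{B}^n$ an admissible fixed-point family of the maximal possible dimension $d=2n-3$. The cases $n=1,2$ are essentially known (trivial, and subsumed by Theorem \ref{mainthm} applied to the disk), so I focus on $n\ge 3$.

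The main step will be to identify such a family and to verify a higher-dimensional version of the admissibility conditions of Definition \ref{microclean}. Since the billiard ball map $\beta$ on $B^*\partial\mathbb{B}^n\cong\{(q,\xi)\in T^*S^{n-1} : |\xi|<1\}$ commutes with the $\mathrm{O}(n)$-action, it preserves $|\xi|$, and every trajectory lies in the great $2$-disk spanned by $q$ and $\xi^{\sharp}$ as an inscribed regular polygon. For each integer $k\ge 3$ I would take
\[
\Lambda_{T_k}\ :=\ \bigl\{(q,\xi)\in B^*\partial\mathbb{B}^n\ :\ |\xi|=\cos(\pi/k)\bigr\},
\]
a smooth homogeneous sphere bundle over $S^{n-1}$ of dimension $(n-1)+(n-2)=2n-3$, every point of which is a fixed point of $\beta^k$ producing a closed orbit of length $T_k=2k\sin(\pi/k)$. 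I would then check: \emph{(i)} the length spectrum of $\mathbb{B}^n$ equals $\{2k'\sin(\pi m'/k'):\gcd(m',k')=1\}$, which accumulates only at integer multiples of $2\pi$, so $T_k$ is isolated; \emph{(ii)} since $\beta^k\equiv\mathrm{id}$ on $\Lambda_{T_k}$, we have $T\Lambda_{T_k}\subseteq\ker(d\beta^k-I)$, and the single transverse (radial) direction is twisted nontrivially because the per-bounce rotation angle $\phi(|\xi|)=2\arccos(|\xi|)$ satisfies $\phi'(|\xi|)=-2/\sqrt{1-|\xi|^2}\ne 0$, forcing equality $\ker(d\beta^k-I)=T\Lambda_{T_k}$; \emph{(iii)} $\cos(\pi/k)<1$ for $k\ge 3$ keeps $\Lambda_{T_k}$ away from the glancing set. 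Feeding $d=2n-3$ into \eqref{dyn:higher} would then give
\[
\frac{1}{\lambda}\int_{\lambda}^{2\lambda}|R(\tau)|\, d\tau\ \gg\ \lambda^{(2n-3)/2}\ =\ \lambda^{n-3/2}.
\]

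The hard part will be the isolation condition \emph{(i)}: I would have to rule out coincidences $2k\sin(\pi/k)=2k'\sin(\pi m'/k')$ for other admissible pairs $(k',m')$. I expect to handle this either by choosing $k$ large, so that $T_k\approx 2\pi$ and the only serious competitors are other $T_{k',1}$ (distinguished by the strict monotonicity of $j\mapsto 2j\sin(\pi/j)$ for $j\ge 2$), or by invoking transcendence-theoretic input (Niven, Baker) to exclude algebraic coincidences among values of the form $k\sin(\pi m/k)$. A secondary technical point is to make the higher-dimensional extension of Definition \ref{microclean} precise, and to confirm that the Duistermaat--Guillemin clean-intersection wave-trace expansion underlying \eqref{dyn:higher} applies to $\Lambda_{T_k}$; this I expect to be routine given the transverse nondegeneracy verified in \emph{(ii)}.
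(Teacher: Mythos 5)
Your proposal matches the paper's proof: both apply \eqref{dyn:higher} with $d=2n-3$ to the $(2n-3)$-dimensional sphere bundle $\Lambda_{T_k}=\{(q,\xi)\in B^*\partial\mathbb{B}^n:|\xi|=\cos(\pi/k)\}$ of inscribed regular $k$-gons in all planar sections, and verify the analogues of the three admissibility conditions of Definition \ref{microclean}. The only differences are in bookkeeping --- you check cleanliness directly via the nonvanishing of $\phi'(|\xi|)$ where the paper instead cites the Bott nondegeneracy of the length function (Farber--Tabachnikov) together with Sogge--Zelditch, and the isolation condition is considerably easier than you anticipate: the paper simply invokes the circle case already settled in Subsection \ref{ellipse} and notes that $k=3$ works, so no transcendence-theoretic input is needed.
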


\subsection{Outline of the proof of Theorem \ref{mainthm}}  To illustrate the main ideas of the proof of Theorem \ref{mainthm}, we sketch the proof here for the Dirichlet boundary conditions in the special case when $\Omega$ is a smooth strictly star--shaped planar domain with an admissible family of closed billiard trajectories $\Lambda_T \subset B^*\partial \Omega$ satisfying an additional assumption
\begin{equation} \label{simplify}
\Lambda_T \cap   0_{B^*\partial \Omega}  = \emptyset.
\end{equation} 
Here, the zero-section $0_{B^*\partial \Omega}$ corresponds in $S_{\partial \Omega}^*\Omega$ to the inward pointing  unit conormal directions to the boundary.   One can check that an ellipse satisfies these assumptions, see Section \ref{ellipse}. The fact that the orbits belonging to $\Lambda_T$ are not orthogonal to the boundary $\partial\Omega$ significantly simplifies the analysis (see subsection \ref{discussion}).

The proof  has two main ingredients. The first is the {\it  Rellich identity} (\ref{Rellich}) for Dirichlet eigenvalues which gives an explicit formula for the trace $\sum_{j} \rho(\lambda_j - \lambda)$ with $\rho \in {\mathcal S}(\R)$ as a weighted boundary trace of the normal derivatives of the Dirichlet eigenfunctions. Here and further on, $\mathcal{S}(\R)$ denotes the Schwartz space of rapidly decaying functions. 
The second ingredient is a conormal expansion for the boundary  trace of the wave operator at microlocally clean fixed point sets of the billiard flow $G^t$, with period $t=T$, due to Hezari and Zelditch \cite{HZ}.  

Let $\nu_{q}$ be the unit outward-pointing boundary normal at $q \in \partial \Omega.$ 
We define the usual  normalized Dirichlet eigenfunction boundary traces to be 
 \begin{align} \label{bdytraces}
 u_{j}^b(q)&:= \frac{1}{\lambda_j} \partial_{\nu} \phi_{\lambda_j}(q) \text{ with } q \in \partial \Omega.
 \end{align}
 The Rellich identity for Dirichlet eigenvalues  \cite{Re} says that $2 \lambda_j^{2} = \int_{\partial \Omega} \langle \nu_{q}, q \rangle \,\, | \partial_{\nu} \phi_{\lambda_{j}}(q)|^2 \, d\sigma(q)$. When written in terms of the normalized boundary traces in (\ref{bdytraces}), this becomes
\begin{equation} \label{Rellich}
\int_{\partial \Omega} \langle \nu_{q}, q \rangle \,\, | u_{j}^b(q)|^2 \, d\sigma(q) =  2. \end{equation}
The simple identity in (\ref{Rellich}) yields the following suggestive formula for the spectral counting function,
\begin{equation} 
N(\lambda) = \frac{1}{2} \sum_{\lambda_j \leq \lambda} \int_{\partial \Omega} \langle \nu_{q}, q \rangle \,\, | u_{j}^b(q)|^2 \, d\sigma(q). \end{equation}

Without loss of generality we may assume that $\Omega$ is strictly star-shaped with respect to  the origin $0 \in \Omega$. Then the  weight function 
$F(q) = \langle q, \nu_{q} \rangle > 0 $ for any $q \in \partial \Omega$ (cf. \cite{BH}).
Similarily, (\ref{Rellich}) also implies that for  $\rho_T \in S(\R)$ with $\hat{\rho_T} \in C^{\infty}_{0}(\R),$ $\hat{\rho_T}(T) = 1$ and  $0 \notin \text{supp} \,  \hat{\rho_T},$
\begin{equation} \label{key}
\sum_{j=1} \rho_T(\lambda-\lambda_j) =  \frac{1}{2}  \int_{\R} \hat{\rho_T}(t)  e^{i t \lambda} \, Tr_{\partial \Omega} E_F^{b}(t) \, dt. \end{equation}
In $(\ref{key})$, $E_F^b(t):C^{\infty}(\partial \Omega) \to C^{\infty}(\partial \Omega)$ is the {\em weighted} boundary trace operator with Schwartz kernel
\begin{equation} \label{bdywave}
E_F^{b}(t,q,q') = \sum_{j=1}^{\infty} e^{- i \lambda_j t}  \, \sqrt{\langle q, \nu_q \rangle} \sqrt{\langle q',\nu_{q'}}\rangle \,  u_j^b(q) \overline{u_j^b(q')} \text{ with }(q,q') \in \partial \Omega \times \partial \Omega, \end{equation}
and $Tr_{\partial \Omega} E_F^b(t) = \int_{\partial \Omega} E_F^{b}(t,q,q) d\sigma(q).$

It is important to note that while the LHS of (\ref{key}) is the usual interior trace, the RHS involves only a weighted boundary trace, {\em not} the more complicated interior counterpart. Indeed, as is well known, the Dirichlet half wave operator $U_{\Omega}(t) = e^{-it \sqrt{ \Delta_{\Omega}} }$ is quite complicated even for small times. Thus, instead of attempting to adapt  the wave-trace methods in the boundaryless case directly to manifolds with boundary,  we use the Rellich formula (\ref{Rellich}) to reduce the asymptotic analysis of $N(\lambda)$ and the corresponding remainder $R(\lambda)$ entirely to the boundary, $\partial \Omega$. In view of (\ref{key}) and under the assumption (\ref{simplify}), we apply conormal expansions for $Tr_{\partial \Omega}E^b(t)$   to get lower bounds for $R(\lambda).$ Indeed, with $\hat{\rho_T} \in C^{\infty}_{0}(\R)$ as above and  under the assumption (\ref{simplify}), it is proved in \cite[Lemma 7]{HZ} that \begin{equation}  \label{conormal}
Tr_{\partial \Omega}E_F^{b}(t) = \int_{\partial \Omega}E_F^b(t,q,q) \ d \sigma(q) \sim_{t \rightarrow T}  \Big( \int_{B^*\partial \Omega} a_0(q)  \gamma(q,\eta) dq d\eta \Big) (t-T+i0)^{-\frac{3}{2}}.  \end{equation}
Here, 
$$ a_{0}(q) = c_{\Lambda_T}  \langle q, \nu_q \rangle$$ with  $c_{\Lambda_T} \neq 0$ and $\gamma(q,\eta) = \sqrt{1- |\eta|^2_g}.$ 
Strictly speaking, the weight function $F(q) = \langle q, \nu_q \rangle$ is not present in \cite{HZ} and the analysis is for the sine kernel $\frac{ \sin t \sqrt{\Delta_{\Omega}}  }{ \sqrt{\Delta_{\Omega}} },$ but the expansion in (\ref{conormal}) follows in exactly the same way.
Since the  weight function $ F(q) = \langle q, \nu_{q} \rangle > 0 $ for any $q \in \partial \Omega$, and so it follows that  $a_{0}(q) \neq 0$ for any $q \in \partial \Omega.$  Substitution of the expansion (\ref{conormal}) into (\ref{key}) gives
\begin{equation} \label{upshot1} 
\sum_j \rho_T(\lambda_j-\lambda) \sim_{\lambda \rightarrow \infty}  C_0  e^{i T \lambda} \sqrt{\lambda}.\end{equation}
Moreover, since $a_0(q) \neq 0$  for all $q \in \partial \Omega$ in (\ref{conormal}), it follows that $ \int_{B^*\partial \Omega} a_0(q)  \gamma(q,\eta) dq d\eta  \neq 0$ and consequently, $C_0 \neq 0$ in (\ref{upshot1}).

 The last step is, modulo some technical modifications, similar to the analysis in the boundaryless case. Namely, using (\ref{2term}) and the support properties of $\hat{\rho_T},$ we show that
 \begin{equation} \label{upshot2}
 \sum_{j} \rho_T(\lambda_j - \lambda) = \int_{0}^{\infty} \rho_T(\tau-\lambda) dN(\tau)  = \int_{0}^{\infty} \rho_T(\tau -\lambda) dR(\tau). \end{equation}
Thus, for a smooth strictly star-shaped domain $\Omega$ satisfying (\ref{simplify}) it then follows from (\ref{upshot2}) and (\ref{upshot1}) that 
\begin{equation}\label{upshot3}
  \int_{0}^{\infty} \rho_T(\tau -\lambda) dR(\tau) \sim_{\lambda \to \infty} C_0 e^{iT\lambda} \sqrt{\lambda}. \end{equation}
 Integration  by parts  and an application of Sarnak's calculus lemma (see Section \ref{sarnak}) completes the proof. 

\subsection{Discussion} 
\label{discussion}
As we mentioned in the previous subsection, once isolation and cleanliness are established (see  subsection \ref{ellipse}), the above argument can be applied without changes if $\Omega$ is an ellipse. However,  in some other examples,  such as domains of  constant width, the assumption \eqref{simplify} is certainly {\em not} satisfied. In fact, in the latter case
the {\em entire} fixed point set of bouncing-ball geodesics is co-normal to the boundary; that is,  $ ( \tilde{ \Lambda}_T \cap T^*_{\partial \Omega} \Omega )\subset N^*\partial \Omega.$
  Consequently, the proof in \cite[Lemma 7]{HZ} of the conormal expansion in (\ref{conormal}) does not apply directly, since  the boundary restriction operator $\gamma_{\partial \Omega}: C^0(\Omega) \to C^0(\partial \Omega)$ (see \cite[Section 2.2]{HZ}) is not a homogeneous Fourier integral operator (FIO) in the sense of H\"{o}rmander microlocally near $0_{T^*\partial \Omega} \times N^*\partial \Omega$. To compensate, we  prove a semiclassical variant of (\ref{conormal}) using the calculus of compactly-supported semiclassical Fourier integral operators \cite{GuSt}. 
  

We  note that variants of the Rellich identity in (\ref{Rellich}) have appeared  in several  articles related to other eigenfunction phenomena such as ``Quantum Ergodic Restriction"  \cite{CTZ}, $L^2$ restriction bounds for the Dirichlet boundary traces \cite{HT}, completeness of boundary traces \cite{HHHZ},  and control estimates for eigenfunctions \cite{BZ1,BHW}.  For the purposes of this paper, one could alternatively avoid use of the Rellich identity and instead apply the Possion summation formula of Guillemin-Melrose \cite{GM} for the interior wave trace at non-trivial periods $T \neq 0.$ However, we find the approach via the Rellich identity for the boundary traces very convenient and  follow it throughout the paper.  

\subsection{Plan of the paper}
The plan of the paper is as follows. In Section \ref{higher:sec} we present Sarnak's calculus lemma and prove Theorem~\ref{higher}. The rest of the article is devoted to the proof of Theorem \ref{mainthm} and its applications.  In Section \ref{micro}, we review some basic semiclassical analysis that is used later on. Section \ref{multilink} provides  background on billiard dynamics; we introduce there certain canonical relations relevant to our analysis. 
In Section \ref{guts}, we derive a conormal expansion for the weighted boundary wave trace microlocalized to a neighbourhood of a clean fixed point set of the billiard flow. 
The proof of Theorem \ref{mainthm} is presented in Section~\ref{mainproof}. Combining the conormal expansion for  the weighted boundary wave trace  with Sarnak's calculus lemma, we  prove Theorem \ref{mainthm} for the  Dirichlet boundary conditions in Subsection \ref{sec:dirichlet}.   
The changes needed to prove Theorem \ref{mainthm}  in the case of Neumann boundary conditions are presented in Subsection \ref{Neumann}.
Finally,  in Section \ref{examples}, we discuss some examples for which the  assumptions in Theorem \ref{mainthm} are satisfied.

\subsection*{Acknowledgments}  We are grateful to L. Hillairet for several comments and corrections regarding earlier versions of the paper, and to L. Polterovich for numerous helpful discussions. We would also like to thank  K.~Bezdek, V.~Kaloshin, S. Nonnenmacher, L.~Parnovski, V.~Petkov, D. Sher, S. Tabachnikov, D. Vassiliev, J. Wunsch and S. Zelditch for useful remarks, as well as M. Levitin and  L.~Houde Therrien for producing the figures.
This research has been supported in part by NSERC, FRQNT and Canada Research Chairs Program. 
While working on the paper, the first named author was a postdoctoral fellow at the CRM and a visitor at the IHES.
The hospitality of these institutions is gratefully acknowledged. 

\section{Lower bounds in higher dimensions}
\label{higher:sec}
\subsection{Sarnak's calculus lemma} 
\label{sarnak}
The following lemma is used in the proofs of both Theorems \ref{mainthm} and \ref{higher}. It is a straightforward generalization 
of  \cite[Lemma 5.1]{S}.

\begin{lem} \label{calclemma}
Let $R(t)$ be a locally integrable function on $\mathbb{R}$ satisfying $R(t)=\mathcal{O}(|t|^s)$ for some $s>0$ .  Assume that, for some Schwartz function $\psi \in {\mathcal S}(\R)$  and some $\alpha >0$, we have
\begin{equation} \label{calcint}
\left| \int_{-\infty}^{\infty} R(t) \psi(x-t) dt \right| \gg x^\alpha.
\end{equation}
Then 
\begin{equation*}
\frac{1}{X} \int_{X}^{2X} |R(t)| \ dt  \gg X^\alpha.
\end{equation*}
\end{lem}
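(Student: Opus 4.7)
The plan is to compare the convolution integral $I(x) := \int_{-\infty}^{\infty} R(t)\,\psi(x-t)\,dt$ to the average of $|R|$ on the dyadic block $[X,2X]$, exploiting the Schwartz decay of $\psi$ to localize $t$ near $x$. First, I would integrate the pointwise lower bound $|I(x)| \gg x^{\alpha}$ over $x\in[X,2X]$ (valid for $X$ large) to get $\int_X^{2X} |I(x)|\,dx \gg X^{\alpha+1}$, then apply Fubini to rewrite the left-hand side as $\int_{\mathbb{R}} |R(t)|\,\Psi_X(t)\,dt$, with $\Psi_X(t) := \int_X^{2X} |\psi(x-t)|\,dx \leq \|\psi\|_{L^1}$. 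The task is then to bound this quantity from above by a multiple of $\int_{X/2}^{3X}|R(t)|\,dt$, plus a remainder negligible compared with $X^{\alpha+1}$.

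For the second step, I would split the $t$-integration into the near region $[X/2, 3X]$ and its complement. On the near region, the bound $\Psi_X(t) \leq \|\psi\|_{L^1}$ gives a contribution of at most $\|\psi\|_{L^1} \int_{X/2}^{3X} |R(t)|\,dt$. On the far region, $|x-t| \geq X/2$ for every $x \in [X, 2X]$, so the Schwartz bound $|\psi(y)| \ll_N (1+|y|)^{-N}$ combined with the polynomial growth $|R(t)| = \mathcal{O}(|t|^s)$ and the substitution $t = Xu$ yields a contribution of order $\mathcal{O}_N(X^{s+2-N})$; choosing $N > s+1-\alpha$ makes this $o(X^{\alpha+1})$. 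Together these bounds force $\int_{X/2}^{3X} |R(t)|\,dt \gg X^{\alpha+1}$.

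To finish, I would pass from the interval $[X/2, 3X]$ to the desired $[X, 2X]$ by a dyadic pigeonhole: the inequality above, together with the decomposition $[X/2,3X] = [X/2,X] \cup [X,2X] \cup [2X,3X]$, forces at least one of the three pieces to be $\gg X^{\alpha+1}$, and since the same argument applies with $X$ replaced by $X/2$ or $2X$, one propagates to the conclusion $\frac{1}{X}\int_X^{2X}|R(t)|\,dt \gg X^{\alpha}$. The main technical obstacle is the far-region tail estimate: one must handle $\int_{|t|\geq X} |R(t)| \int_X^{2X}(1+|x-t|)^{-N}\,dx\,dt$ uniformly in both the large positive and large negative regimes of $t$, controlling the growth $|t|^s$ against the Schwartz decay after the rescaling $t = Xu$. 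Once that tail bound is in hand with $N$ chosen large enough relative to $s$ and $\alpha$, the remainder of the argument is routine bookkeeping.
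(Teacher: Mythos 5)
Your first two steps — integrating the lower bound over an interval in $x$, applying Fubini with the triangle inequality, and then splitting the $t$-integration into near and far regions with the Schwartz-tail estimate on the far piece — are essentially the argument the paper uses. The difference is in the choice of $x$-interval, and it turns out to be the whole ballgame.

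The paper integrates $x$ over $[5X/4,\,7X/4]$. For those $x$, any $t$ outside $[X,2X]$ satisfies $|x-t|\geq X/4$, so the near $t$-region is \emph{exactly} $[X,2X]$ and the argument closes immediately: $C\int_X^{2X}|R| \geq \int_{-\infty}^{\infty}\big[\int_{5X/4}^{7X/4}|\psi(x-t)|\,dx\big]|R(t)|\,dt - O(1) \gg X^{\alpha+1}$. You instead integrate $x$ over $[X,2X]$, which makes the near $t$-region $[X/2,3X]$, and you then attempt to pass from $\int_{X/2}^{3X}|R|\gg X^{\alpha+1}$ to $\int_X^{2X}|R|\gg X^{\alpha+1}$ by a pigeonhole. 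This final step has a genuine gap: the implication is simply false as a statement about a generic locally integrable function. For example, take $R(t)=t^{\alpha}$ on $\bigcup_k[4^k,\,2\cdot 4^k]$ and $R\equiv 0$ on the complementary gaps. Then one checks that $\int_{X/2}^{3X}|R|\gg X^{\alpha+1}$ holds for all large $X$, yet $\int_X^{2X}|R|=0$ whenever $X=2\cdot 4^k$. Your phrase ``forces at least one of the three pieces to be $\gg X^{\alpha+1}$'' is accurate for each fixed $X$, but \emph{which} piece is large can drift with $X$, so no amount of rescaling $X\mapsto X/2$ or $2X$ pins down the middle piece uniformly. (This $R$ does not satisfy the original hypothesis on the signed convolution, so it is not a counterexample to the lemma itself — but it does show your intermediate inequality is too lossy to imply the conclusion on its own.)

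The fix is trivial and brings you back in line with the paper: replace $\int_X^{2X}(\cdots)\,dx$ by $\int_{5X/4}^{7X/4}(\cdots)\,dx$ at the outset, note that $\int_{5X/4}^{7X/4}|\psi(x-t)|\,dx\leq\|\psi\|_{L^1}$ uniformly in $t\in[X,2X]$, and observe that the $t$-integrals over $(-\infty,X]$ and $[2X,\infty)$ are $O(1)$ by the rapid decay of $\psi$ against the polynomial growth of $R$. Then $\|\psi\|_{L^1}\int_X^{2X}|R|\gg X^{\alpha+1}$ follows directly with no pigeonhole needed. One minor point of wording: ``apply Fubini to rewrite the left-hand side as $\int|R(t)|\Psi_X(t)\,dt$'' should be ``bound the left-hand side from above by,'' via the triangle inequality inside the inner integral before Fubini; fortunately the inequality goes in the direction you need.
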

\begin{proof} We essentially follow the argument in \cite[Section 5.1]{LPS}. 
The hypothesis $R(t)=~\mathcal{O}(|t|^s)$ implies that $\int_{-\infty}^{\infty} |R(t) \psi(x-t)| dt < \infty$.  Integrating both sides of $\int_{-\infty}^{\infty} |R(t) \psi(x~-t)| dt \gg x^\alpha$ with respect to $x$ from $5X/4 $ to $7X/4 $ leads to 
\begin{equation} \label{lowerbnd}
\int_{-\infty}^{\infty} \bigg [ \int_{5X/4}^{7X/4} | \psi(x-t) | \ dx \bigg ] | R(t) | \ dt \gg  X^{\alpha+1}.
\end{equation}
Let us split the integral in $t$ into three parts and rewrite the left hand side as
\begin{multline*}
\int_{-\infty}^{X} \int_{5X/4}^{7X/4} | \psi(x-t) | | R(t) | dx \  dt + \int_{X}^{2X} \int_{5X/4}^{7X/4} | \psi(x-t) | | R(t) | dx \ dt+\\ \int_{2X}^{\infty} \int_{5X/4}^{7X/4} | \psi(x-t) | | R(t) | dx \ dt.
\end{multline*}
Since $\psi$ is rapidly decaying and $R(t)$ grows at most polynomially, the first and the third integrals above are $\mathcal{O}(1)$ as $X \to \infty.$ At the same time, there exists a a constant $C>0$ such that 
$\int_{5X/4}^{7X/4} | \psi(x-t)| dx \le C $ uniformly for $t\in [X, 2X]$.   Taking these estimates into account and using \eqref{lowerbnd} we get:
$$
C\,\int_X^{2 X} |R(t)| dt \ge \int_{X}^{2X}\bigg [ \int_{5X/4}^{7X/4}  | \psi(x-t) | \ dx \bigg ] | R(t) | dt \gg X^{\alpha+1}.
$$
Dividing by $X$ completes the proof of the lemma.
\end{proof}


\begin{rem} A straightforward modification of  the proof of Lemma \ref{calclemma} shows that instead of averaging over the interval of length $X$  one could average over an interval of length  $X^{\beta}$ for any $\beta>0$.
As a consequence, the lower bounds proved in Theorems \ref{mainthm} and \ref{higher}, as well as in Proposition \ref{ball:bound}, hold on average for any interval of length $\lambda^\beta$, $\beta>0$. 

\end{rem}

\subsection{Proof of Theorem \ref{higher}}  Let $M$ be a smooth, compact, $n$-dimensional Riemannian manifold with boundary, and 
let $\psi \in {\mathcal S}(\mathbb{R})$ with $\widehat \psi \in C_0^{\infty}(\mathbb{R})$,
$\widehat \psi =1$ near zero and such that the support of $\psi$ is sufficiently small. Then, as was shown in \cite{Ivr} (see also \cite{Me, Saf}),
\begin{equation}
\label{ivrii} 
\int_{-\infty}^\infty N(\lambda) \psi (\lambda-t) dt = \sum_{j=0}^\infty a_j(M)\lambda^{n-j} + \mathcal{O}(\lambda^{-\infty}),
\end{equation}
where $N(\lambda)$ is the eigenvalue counting function for either Dirichlet or Neumann boundary conditions. The coefficients $a_j(M)$  are geometric invariants of the Riemannian manifold $M$ and its boundary $\partial M$. In general, they depend on the choice of the boundary conditions, 
and differ by nonzero multiplicative constants from the corresponding heat invariants of $M$, see \cite[Section 5.1]{JPT}.
In particular, $a_0(M)$ is proportional to the volume of $M$,  $a_1(M)$ is proportional to the length of the boundary $M$. The coefficients $a_0(M)$ and $a_1(M)$ 
correspond to the main terms in Weyl's law. Formula \eqref{ivrii} implies the following asymptotics for the remainder:
\begin{equation}
\label{WR}
\int_{-\infty}^\infty R(\lambda) \psi (\lambda-t) dt = a_2(M) \lambda^{n-2} +\mathcal{O}(\lambda^{n-3}),
\end{equation}
where
\begin{equation}
\label{a2}
a_2=C \left(\int_M \mathcal{K} + 2n \int_{\partial M} H\right).
\end{equation}
Here $C$ is some nonzero constant, $\mathcal{K}$ is the {\it scalar curvature} of $M$ and $H$ is the {\it mean curvature} of the boundary $\partial M$ \cite{MS, BG}. Note that the formula for $a_2(M)$ is the same for both Dirichlet and Neumann boundary conditions.
We have the following
\begin{prop} 
\label{Riesz}
Let $M$ be a smooth compact $n$-dimensional Riemannian manifold with boundary such that
$$\int_M \mathcal{K} + 2n \int_{\partial M} H \neq 0,$$
where $\mathcal{K}$ and $H$ denote the scalar and the mean curvature, respectively.
Then for either Dirichlet or Neumann boundary conditions,  the Weyl remainder $R(\lambda)$ satisfies
the lower bound
$$
\frac{1}{\lambda}\int_{\lambda}^{2 \lambda} |R(\tau)| \ d\tau \gg\lambda^{n-2}.
$$ 
\end{prop}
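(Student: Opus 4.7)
The plan is to reduce Proposition \ref{Riesz} directly to the smoothed third-term asymptotic \eqref{WR} by means of Sarnak's calculus lemma (Lemma \ref{calclemma}).

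First, I would verify that the leading coefficient in \eqref{WR} is nonvanishing. By formula \eqref{a2} together with the hypothesis $\int_M \mathcal{K}+2n\int_{\partial M}H\neq 0$, we have $a_2(M)\neq 0$. I would then read \eqref{WR} as a convolution evaluated at $\lambda$,
\[
I(\lambda)\ :=\ \int_{-\infty}^{\infty} R(\tau)\,\psi(\lambda-\tau)\,d\tau\ =\ a_2(M)\,\lambda^{n-2}+O(\lambda^{n-3}),
\]
so that the nonvanishing of $a_2(M)$ combined with $n\ge 3$ yields $|I(\lambda)|\gg \lambda^{n-2}$ for all sufficiently large $\lambda$.

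To invoke Lemma \ref{calclemma} I still need the polynomial growth bound $R(\tau)=O(|\tau|^s)$ for some $s>0$. This is supplied by the classical one-term Weyl estimate of H\"ormander, namely $R(\lambda)=O(\lambda^{n-1})$, which holds for both Dirichlet and Neumann boundary conditions without any dynamical assumption. Since $\widehat{\psi}\in C_0^{\infty}(\R)$ implies $\psi\in\mathcal{S}(\R)$, the hypotheses of Lemma \ref{calclemma} are all in force; applying it with $\alpha=n-2$ (and $s=n-1$) then delivers precisely
\[
\frac{1}{\lambda}\int_\lambda^{2\lambda}|R(\tau)|\,d\tau\ \gg\ \lambda^{n-2}.
\]

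I do not expect any genuine analytical obstacle on this route: the proposition is essentially a corollary of the expansion \eqref{WR} combined with Lemma \ref{calclemma}. The only care required is bookkeeping --- in particular, noting that the $O(\lambda^{n-3})$ remainder is negligible compared to the leading $\lambda^{n-2}$ contribution precisely because $n\ge 3$, which is exactly the regime covered by the proposition. The substantive content behind this argument (the identification of $a_2$ with the stated curvature integrals, and the derivation of \eqref{WR} itself) is imported from the work of Ivrii, Melrose, Safarov, and Branson--Gilkey already cited in the paper, and no new microlocal input is needed to pass from \eqref{WR} to the averaged lower bound.
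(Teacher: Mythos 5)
Your argument is exactly the paper's proof, only written out in more detail: the paper disposes of the proposition in two sentences, deducing $a_2(M)\neq 0$ from \eqref{a2} and then invoking \eqref{WR} and Lemma \ref{calclemma}. You have correctly filled in the implicit bookkeeping — that the $O(\lambda^{n-3})$ error in \eqref{WR} is dominated because $n\ge 3$, and that the polynomial-growth hypothesis of Lemma \ref{calclemma} is supplied by the one-term Weyl estimate. (One tiny quibble: for $\tau<0$ the remainder $R(\tau)$ is by definition just minus the two main terms, so the uniform bound is $R(\tau)=O(|\tau|^{n})$ rather than $O(|\tau|^{n-1})$; this does not affect the argument, since Lemma \ref{calclemma} only needs \emph{some} polynomial exponent $s>0$.)
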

\begin{proof} By  formula \eqref{a2}, we have $a_2(M) \neq 0$. The result is then a direct consequence
of Lemma~\ref{calclemma} and  formula \eqref{WR}.
\end{proof}
Theorem \ref{higher} follows immediately from Proposition \ref{Riesz} by observing that for Euclidean domains $\mathcal{K} \equiv0$.
\begin{rem}
\label{Fed}
It follows from the results of \cite{Fedosov} that formula \eqref{ivrii} holds for Euclidean polyhedra. Therefore, lower bound \eqref{higher:bound} is also valid 
in this case, provided the corresponding coefficient $a_2$ is nonzero. This is true generically --- in particular, for any convex polyhedron, as follows from the explicit formula for $a_2$ given in \cite{Fedosov}. 
Note also that the nonperiodicity condition is always satisfied for polyhedra \cite{Vas1}, and hence the remainder estimate \eqref{2term2} holds.
\end{rem}
\begin{rem} Proposition \ref{Riesz} admits the following straightforward generalization. Let $s=\min\{j\ge 2| \,  a_j(M) \neq 0\}$. Then
$$
\frac{1}{\lambda}\int_{\lambda}^{2 \lambda} |R(\tau)| \ d\tau \gg\lambda^{n-s}.
$$ 
\end{rem}

\smallskip
\begin{rem}
Sections 3--6 below  are devoted to the proof of Theorem \ref{mainthm}. While the result is concerned with planar domains, 
we present the arguments  in arbitrary dimension.  In particular, this justifies  formula \eqref{dyn:higher} which is used in subsection \ref{balls}. 
\end{rem}
\section{Semiclassical Pseudodifferential and Fourier Integral Operators} \label{micro}

We briefly review here the relevant calculus of semiclassical pseudodifferential and Fourier integral operators that will be used in the proof of Theorem \ref{mainthm} in Section \ref{guts}. The reader is referred to \cite{Zw}  for further details.
\subsection{Semiclassical pseudodifferential operators ($h$-PsiDOs)}  Let $M$ be a $C^{\infty}$ compact manifold.  The basic semiclassical  symbol spaces are
\begin{eqnarray} \label{symbol}
\nonumber S^{m,k}_{cl}(T^*M): &=& \{ a \in C^{\infty}(T^*M \times (0,h_0]); \\
&& \hskip-3truecm a \sim_{h \rightarrow 0} h^{-m} \sum_{j=0}^{\infty} a_{k-j}(x,\xi) h^{j}  \text{ with } | \partial_{x}^{\alpha} \partial _{\xi}^{\beta} a_{k-j}(x,\xi)| \leq C_{\alpha,\beta} \langle \xi \rangle^{k-|\beta |} \}.  \end{eqnarray}
Here, we use the standard notation for  $\langle \xi \rangle : = \sqrt{ 1 + |\xi|^2}.$ The corresponding space of {\em{$h$-pseudodifferential operators}} is 
\begin{equation} \label{psdo}
\Psi^{m,k}_{cl}:= \{ A_{h}: C^{\infty}(M) \rightarrow C^{\infty}(M); A_{h}= Op_{h}(a) \, \text{with}\, a \in S^{m,k}_{cl}(T^*M) \}, \end{equation}

\noindent where the Schwartz kernels are locally of the form
$$Op_h(a)(x,y) = (2\pi h)^{-n} \int_{\R^n} e^{i \langle x-y,\xi \rangle/h} a(x,\xi,h) \, d\xi, \text{ for } a \in S^{m,k}_{cl}.$$
Given $A_{h} \in \Psi^{m,k}_{cl},$ the principal symbol is defined as $\sigma(A_{h}) := h^{-m} a_0$ using the notation of (\ref{symbol}).
It is sometimes convenient to use the $h$-Weyl quantization with corresponding Schwartz kernel
\begin{equation} \label{weyl}
Op_h^w(a)(x,y) = (2\pi h)^{-n} \int_{\R^n} e^{i \langle x-y,\xi \rangle/h} a \left( \frac{x+y}{2},\xi,h \right) \, d\xi, \text{ for } a \in S^{m,k}_{cl}. \end{equation}

One useful feature of the latter quantization scheme is that for $a(x,\xi,h)$ real-valued, the corresponding Weyl quantization is formally self-adjoint with
$Op_h^w(a)^* = Op_h^w(a).$ The operators $A_h \in \bigcup_{j,k \in \mathbb Z} \Psi^{j,k}_{cl}$ form an algebra; in particular,   given $A_{h} \in \Psi^{m,k}_{cl} \text{ and } B_{h} \in \Psi^{m',k'}_{cl},$ the composition
$ A_h \circ B_h \in \Psi^{m+m',k+k'}_{cl}$ has the principal symbol $ \sigma( A_h \circ B_h)(x;\xi) = \sigma(A_h)(x,\xi) \cdot \sigma(B_h)(x,\xi) = h^{-m-m'} a_0(x,\xi) b_0(x,\xi)$ in terms of the local representation in (\ref{psdo}).

The  sequence  of eigenfunctions $(\phi_h)_{h \in (0,h_0]}$ has compact $h$-wave front supported in an arbitrarily small neighborhood of the characteristic variety  $ S^*M =\{ (x,\xi) \in T^*M; |\xi|_g = 1 \}$ for $h$ sufficiently small; the following subsection on semiclassical wavefront $WF_{h}$ provides more details.  We are only interested here in $A_h \in \Psi^{m,-\infty}_{cl}$ since all symbols will have compact support in $\xi$ and therefore only the asymptotic behavior in $h$ is relevant.

\subsection{Semiclassical wavefront sets $WF_{h}$}
Let $( \phi_{h})_{h \in (0,h_0]}$ be a family of tempered $L^2$ functions on $M$ in the sense that $\| \phi_{h} \|_{L^2(M)} = {\mathcal O}(h^{-N})$ for some $N>0$ as $h \rightarrow 0^+.$  In the semiclassical case, one is interested in determining decay properties of the family $(\phi_{h})_{h \in (0,h_0]}$, not just regularity properties as functions on $M$. This naturally leads one to  define the notation of a $semiclassical$ $wave$ $front$ $set$,  $WF_{h} (\phi_{h}),$ associated with the family of functions $(\phi_{h})_{h \in (0,h_0]}$; the reader is referred to \cite[Chapter 8, Section 4]{Zw} for further details.  As in the homogeneous case, it is more natural to define the complement of the semiclassical wave front of the family $(\phi_{h})_{h \in (0,h_0]}$ with
\begin{eqnarray} \label{wf}
\nonumber WF_{h} (\phi_{h})^{c} &=& \{ (x,\xi) \in T^*M; \exists a \in S^{0,0}_{cl} \,\, \text{with} \,\,  a(x,\xi) \neq 0 \,\, \\  && \text{and}  \,\, \| Op_{h}(a) \phi_{h} \|_{L^2} = {\mathcal O}(h^{\infty}) \|\phi_{h}\|_{L^2} \}. 
\end{eqnarray}

Finally, we introduce some notation here that is useful when measuring regularity in $h$: we say that a semiclassical family $f_h \in {\mathcal O}_{S}(h^{K})$ provided $\|\partial^{\alpha} f_h \|_{L^{\infty}(\partial \Omega)} = {\mathcal O}_{\alpha} (h^{K}).$

\subsection{Eigenfunction localization} \label{localization}
In order to prove the semiclassical variant of (\ref{conormal}) (see Lemma \ref{basiclemma}), we will use the $h$-microlocal concentration properties of both the eigenfunctions $\phi_h \in C^{\infty}(\Omega)$ and their normalized Dirichlet boundary traces $h \partial_\nu \phi_h \in C^{\infty}(\partial \Omega).$
  We begin $h$-microlocal concentration for the eigenfunctions, $\phi_h$ which amounts to  $h$-microlocally cutting-off homogeneous FIOs near the energy level set $p^{-1}(0) = \{ (x,\xi) \in T^*\Omega; |\xi|_g^2 - 1 =0 \}.$  Such procedures are standard (see \cite[Chapter 8]{Zw}) but, for the convenience of the reader, we briefly review the specific case at hand. By assumption $P(h) \phi_{h} = o(1)$ and since $P(h) = -h^2 \Delta_g - 1$ is clearly $h$-elliptic off $p^{-1}(0).$ Let $\chi \in C^{\infty}_0(\R;[0,1])$ be a standard cutoff function equal to $1$ near the origin  and $\chi_{\Omega} \in C^{\infty}_{0}(\R^n;[0,1])$ with $\chi_{\Omega}(x) = 1$ for $x \in \Omega.$  We define the associated cutoff  
\begin{equation} \label{cutoff0}
\chi_0(x,\xi): = \chi ( p(x,\xi)) \cdot \chi_{\Omega}(x) \end{equation}  and let $\chi_0 (h) := Op_{h}(\chi_0) \in \Psi_{cl}^{0,-\infty}(\R^n)$  be the corresponding $h$-pseudodifferential cutoffs.  It now follows by a standard semiclassical parametrix construction, as in \cite[Theorem 6.4]{Zw}, that
$ \| \phi_{h} - Op_{h}(\chi_{0}) \phi_{h} \|_{L^2(\Omega)} = {\mathcal O}(h^{\infty})$
and so,
\begin{equation} \label{loc}
 WF_{h}( \phi_{h}) \subset p^{-1}(0). \end{equation}

Using a similar argument (see Lemma 48 in \cite{TZ2}), one can prove the following basic $h$-microlocalization result for eigenfunction restrictions. We state the result for general hypersurfaces in arbitrary dimension. We are of course interested here in the particular case where $H= \partial \Omega.$

\begin{lem} \label{resloc}
Let $\Omega$ be a compact manifold (with or without boundary),  $H \subset \Omega$ be a hypersurface and $\gamma_H:C^{0}(\Omega) \rightarrow C^0(H)$ be the restriction operator. Then 
\begin{equation}
WF_{h}(\gamma_H \phi_{h}) \subset B^* H.
\end{equation}
\end{lem}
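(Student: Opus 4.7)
The plan is to apply the wavefront characterization \eqref{wf} directly: given $(q_0,\xi'_0) \in T^*H$ with $|\xi'_0|_g > 1$, I will construct a symbol $a \in S^{0,0}_{cl}(T^*H)$ with $a(q_0,\xi'_0) \neq 0$, supported in $\{|\xi'|_g \geq 1+\epsilon\}$ for some small $\epsilon>0$, and establish
\begin{equation*}
\bigl\| Op_h(a)\,\gamma_H \phi_h \bigr\|_{L^2(H)} = \mathcal{O}(h^\infty)\,\|\phi_h\|_{L^2(\Omega)}.
\end{equation*}
This places every covector with $|\xi'_0|_g > 1$ outside $WF_h(\gamma_H\phi_h)$.

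Next, I would pass to Fermi coordinates $(x',x_n)$ adapted to $H$ in a neighborhood of $q_0$, so that $H = \{x_n = 0\}$ locally and $g = g_H(x',x_n) + dx_n^2$. Inserting cutoffs supported close to $q_0$ in both the output variable $x'$ and the input variable $y'$ reduces the analysis to a compactly supported semiclassical oscillatory integral: the far contributions in $y'$ are $\mathcal{O}(h^\infty)$ since the $\xi'$-integral of $a$ decays faster than any power of $h$ when $|x'-y'|$ is bounded away from zero (Fourier transform of a compactly supported symbol at a point of order $h^{-1}$). Then semiclassical Fourier inversion of the localized $\phi_h$, followed by integration in $y'$, yields
\begin{equation*}
Op_h(a)\gamma_H\phi_h(x') = (2\pi h)^{-n}\!\!\int e^{i\langle x',\xi'\rangle/h}\,a(x',\xi')\,(\mathcal{F}_h \phi_h)(\xi',\xi_n)\,d\xi_n\,d\xi',
\end{equation*}
where $\mathcal{F}_h$ denotes the semiclassical Fourier transform of the $x$-localized eigenfunction.

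The final step is to combine \eqref{loc} with a split of the $\xi_n$-integral. By \eqref{loc}, for any $x$-localization of $\phi_h$ the Fourier transform $\mathcal{F}_h\phi_h$ is $\mathcal{O}(h^\infty)$ outside an arbitrarily small neighborhood of $\{\xi:|\xi|_{g(q_0)}=1\}$. On the support of $a$ one has $|\xi'|^2_{g(q_0)} \geq (1+\epsilon/2)^2$, so $|\xi|^2_{g(q_0)} = |\xi'|^2+\xi_n^2 > 1 + \epsilon$ uniformly in $\xi_n$ in the bounded regime; for $|\xi_n|$ large, the symbol $|\xi|^2_g - 1$ is elliptic and repeated integration by parts in $\xi_n$ produces arbitrary decay. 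Cauchy--Schwarz in $\xi'$ then gives the desired $L^2(H)$ bound. The principal technical obstacle is the bookkeeping that merges these two regimes and commutes the $x$-cutoffs past $\gamma_H$ with controlled semiclassical errors. A secondary subtlety is that the above argument naturally yields containment in the closed co-ball bundle $\overline{B^*H}$; the open statement $B^*H$ in the lemma should either be read in this sense or supplemented by a separate argument near the glancing set, where the phase function $\langle x'-y',\xi'\rangle$ degenerates transversally to $S^*\Omega$.
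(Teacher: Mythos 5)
Your approach is the right one, and it matches what the paper relies on: the paper does not prove this lemma itself but points to Lemma 48 of \cite{TZ2} and to the parametrix localization \eqref{loc}, and the central fact you isolate --- that $|\xi'|_g \geq 1+\epsilon$ forces $|\xi|_g^2 = |\xi'|_g^2 + \xi_n^2 \geq (1+\epsilon)^2 > 1$ for every $\xi_n$, so that the support of $a$ never meets the characteristic variety --- is exactly the key point. However, your handling of the large-$\xi_n$ tail does not work as written: the phase $\langle x',\xi'\rangle$ is independent of $\xi_n$, so integrating by parts in $\xi_n$ gains nothing, and ellipticity of $|\xi|_g^2-1$ cannot be invoked inside the oscillatory integral without further input. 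The fix is to insert the compactly supported cutoff of the parametrix construction \emph{before} restricting: write $\phi_h = Op_h(\chi_0)\phi_h + \mathcal{O}_{L^2}(h^\infty)$ with $\chi_0 \in C_0^\infty(T^*\Omega)$ supported near $S^*\Omega$ (this is precisely what establishes \eqref{loc}), so that the semiclassical Fourier transform of the localized eigenfunction is concentrated, modulo $\mathcal{O}(h^\infty)$, in a compact $\xi$-set and the $\xi_n$-integration ranges over a bounded interval; the large-$\xi_n$ regime then simply disappears. With that one replacement your argument closes. Your final remark about open versus closed co-ball is correct and worth keeping: the argument gives $WF_h(\gamma_H\phi_h) \subset \overline{B^*H} = \{|\xi'|_g \leq 1\}$, which is also how \cite{TZ2} states the result; the paper's open $B^*H$ is a harmless abuse of notation here, since the cutoff $\chi_1$ in \eqref{cutoff2} is identically $1$ on the strictly larger set $\{|\eta|_g < 3/2\}$, so nothing downstream depends on the distinction.
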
 

We note that  $u_{h}^b(q) = N_{\partial \Omega}^{\nu} ( \gamma_H \phi_{h})$ where $N_{\partial \Omega}^{\nu} :C^{\infty}(\partial \Omega) \rightarrow C^{\infty}(\partial \Omega)$ is the Dirichlet-to-Neumann opeartor. It is well-known that $N_{\partial \Omega}^{\nu}  \in Op(S^{1}_{cl}(T^*\partial \Omega)).$
Thus, by fiber rescaling $\xi \mapsto \xi/h$ it follows  that   $N_{\partial \Omega}^{\nu}(q, h D_q) \in \Psi^{1,1}_{cl}(\partial \Omega)$.  Using semiclassical wave-front calculus \cite{Zw}, it  then follows that
$$WF_{h}(u^b_{h}) = WF_{h}( \, N_{\partial \Omega}^{\nu} ( \gamma_H  \phi_{h} ) \, ) \subset WF_{h}( \gamma_H \phi_{h}).$$
Consequently, from Lemma \ref{resloc} one gets that
\begin{equation} \label{localize}
WF_{h}(u^b_{h}) \subset B^* H.
\end{equation}

From now on we let $\chi_1 \in C^{\infty}_0 (T^*\partial \Omega)$ with
\begin{align} \label{cutoff2}
\chi_1(y,\eta) &= 1 \,\,\, \text{when} \,\, |\eta|_g < \frac{3}{2},  \nonumber \\ 
\chi_1(y,\eta) &= 0\,\,\, \text{when} \,\, |\eta|_g >2. \end{align}
Here $| \cdot |_{g}$ denotes the induced metric on $T^*H.$ 

In the special case of Euclidean domains, we note the wave-front containment relation (\ref{localize}) can also be easily verified directly from the potential layer formulas for the eigenfunctions $\phi_{h}$ \cite{TZ2}.

\subsection{Semiclassical Fourier integral operators ($h$-FIOs)} \label{hFIO}

The classical homogeneous theory of FIOs (and $C^{\infty}$ wavefront sets) was developed in \cite{Ho} and continues to be a useful tool in  scattering theory and spectral asymptotics.  However, due to the compactness of $WF_h(u^b_{h})$ in (\ref{localize}), it is useful here to consider operators  $F_h:C^{\infty}(\mathbb{R}^{n_y}) \to C^{\infty}(\mathbb{R}^{n_x})$ with Schwartz kernels locally of the form
\begin{equation} \label{fiodefn}
F_h(x,y) = (2\pi h)^{- (N/2)  - (n_x/2)} \int_{\R^N} e^{i \psi(x,y; \xi)/h} a(x,y; \xi,h) \, d\xi, \end{equation}
for $a(x,y,\xi, h) \in C^{\infty}_0(U \times V \times \R^N \times (0, h_0])$ with $a(x,y,\xi,h) \sim_{h \rightarrow 0} \sum_{j=0}^{\infty} a_j(x,y; \xi) h^{m+j}$, $a_j(x,y;\xi) \in C^{\infty}_0(U \times V \times \R^N)$, for some $m \in \R$, with $U$ and $V$ being open subsets of $\mathbb{R}^{n_x}$ and $\mathbb{R}^{n_y}$, respectively.  We assume that $\psi$ is a non-degenerate phase function, that is
\begin{equation}
C_{\psi}: = \{ (x,y,\xi) \in U \times V \times \R^N; d_{\xi} \psi(x,y,\xi) = 0 \}
\end{equation}
is a submanifold of $U \times V \times \R^N$ and the set of vectors $\{d_{x,y,\xi}(d_{\xi_1}\psi),...,d_{x,y,\xi}(d_{\xi_n}\psi)\}$ is linearly independent on $C_{\psi}$.

We use this local formulation to define our operator between compact, $n$-dimensional manifolds $M$ and $N$.  As a result of the above oscillatory expression, $F_h$ can be associated to an immersed Lagrangian manifold $\Gamma \subset T^*M \times T^*N$, where
$$ \Gamma = \{ (x,d_x \psi; y, -d_y \psi); d_{\xi} \psi (x,y,\xi) = 0 \},$$ by pushing forward $C_\psi$ through the map $i_{\psi}: C_{\psi} \to \Gamma \subset T^*(M \times N)$ given by $(x, y, \xi) \to (x, d_x \psi, y, -
d_y \psi)$; the manifold $\Gamma$ then contains $WF_{h}(F_h).$  For $h \in (0,h_0]$ small, the operators $F_h: C^{\infty}(N) \rightarrow  C^{\infty}(M)$, with Schwartz kernel locally of the form (\ref{fiodefn}), are called compactly supported {\em $h$-Fourier integral operators}, or semiclassical FIOs, and we write $F_h \in \mathcal{F}^{m,-\infty}_{0} (M \times N; \Gamma).$  It is clear that if $\psi$ is a non-degenerate phase in the sense of H\"ormander \cite{Ho}, then $d=0$.

The symbol of a semiclassical
Fourier integral operator $F_h \in \mathcal{F}^{m,-\infty}_{0}(M \times N, \Gamma)$, where $h \in (0,h_0]$, is a
smooth section of $\Omega_{1/2} \otimes L$, that is, the half-density bundle of $\Gamma$ twisted by the Maslov line bundle.  In terms of the semiclassical Fourier integral representation in (\ref{fiodefn}), it is
the square root $\sqrt{d_{C_{\psi}}}$ of the delta-function on
$C_{\phi}$ defined by $\delta(d_{\xi} \psi)$, transported to
its image in $T^* (M \times N)$ under $\iota_{\psi}$. Concretely, if $(\lambda_1, \dots,
\lambda_n)$ are any local coordinates on $C_{\phi}$, extended as
smooth functions in a neighborhood of $C_{\phi}$, then
$$ d_{C_{\psi}}: = \frac{|d \lambda|}{|\partial(\lambda,
d_{\xi}\psi)/\partial(x,y,\xi)|}, $$ where $d \lambda$ is the
Lebesgue density and $|\partial(\lambda,
 d_{\xi}\psi)/\partial(x,y,\xi)|$ denotes the determinant of the Jacobian matrix. In the semiclassical Fourier integral representation (\ref{fiodefn}), the symbol is
transported from the critical set  $C_{\psi}$
 by the Lagrangian immersion $i_{\psi}$ to the set $\Gamma$. At a point $(x_0, \xi_0, y_0, \eta_0) \in \Gamma$, the principal symbol of $F_h$
is defined to be
$$\sigma(F_h) (x_0, \xi_0, y_0, \eta_0) := h^{m} i^*_{\psi} a_0
\sqrt{d_{C_{\psi}}}. $$  This formulation gives a coordinate-invariant definition of the principal symbol.

The transverse intersection calculus for $h$-FIOs states that if $F_1 \in \mathcal{F}^{m_1,-\infty}_{0} (M \times N_1; \Gamma_1)$ and $F_2 \in \mathcal{F}^{m_2,-\infty}_{0} (N_1 \times Z; \Gamma_2)$ with transversal intersection $(\Gamma_1 \times \Gamma_2 ) \, \cap  \, (T^*M \times \Delta_{T^*(N_1 \times N_1)} \times T^*Z),$  then $F_3 = F_1 \circ F_2 \in \mathcal{F}^{m_1 + m_2,-\infty}_{0} (M \times Z; \Gamma_1 \circ \Gamma_2)$. More generally, when $\Gamma_1 \circ \Gamma_2$ is clean with excess $e>0,$ the composite operator $F_3 \in \mathcal{F}^{m_1 + m_2 - \frac{e}{2},-\infty}_{0} (M \times Z; \Gamma_1 \circ \Gamma_2).$

For a more detailed treatment of the compactly-supported $h$-Fourier integral operators in (\ref{fiodefn}), see \cite[Chapter 8]{GuSt} and \cite[Chapter 10]{Zw}.

\section{Billiard dynamics and multilink canonical relations} \label{multilink}
In this section we review some basic notions used in the microlocal analysis related to billiard dynamics. 
Primarily, we follow here the exposition in \cite{HZ} (see also \cite{Ch}, \cite{GM} and  \cite{TZ1,TZ2}).
\subsection{Canonical relations}  Let $G^t$ denote the billiard flow on $\Omega$, that is, the broken geodesic flow on the domain $\Omega$.  It is well-known that \cite{Ch, GM} by microlocalizing away from the set of glancing directions, 
(i.e. the directions tangent to $\partial \Omega$, in $T^*(\mathbb{R} \times \Omega \times \Omega)$), one can construct a microlocal Dirichlet wave parametrix 
\begin{equation} \label{start}
U_{\Omega}(t,x,y) = \sum_{j=-\infty}^{\infty} U^j_{\Omega}(t,x,y),
\end{equation} 
where $U^j_{\Omega} \in I^{-1/4}(\mathbb{R} \times \mathbb{R}^n \times \Omega; \tilde{\Gamma}^j).$  Here,   $\tilde{\Gamma}^j \subset T^*(\mathbb{R} \times \mathbb{R}^n \times \Omega)$ is the \textit{multilink canonical relation} corresponding to the graph of the broken geodesic flow $G^t$ with $j$ reflections. The sum in (\ref{start}) is locally finite and  the full canonical relation of $U_{\Omega}$ is 
\begin{equation}
\tilde{\Gamma} = \bigcup_{ j \in \mathbb{Z}} \tilde{\Gamma}^j.
\end{equation}
We note that this construction takes place in the ambient space $\mathbb{R} \times \mathbb{R}^n \times \Omega$ and that $\tilde{\Gamma}^j$ is denoted by $\Gamma^j_{\pm}$ in \cite{HZ}. The $\tilde{}$ is added here to emphasize that it is a homogeneous object in contrast to the compactly-supported semiclassical counterpart. In the latter case, we drop the  $ \, \tilde{}$.  Moreover, since we consider the half wave operator $U_{\Omega}(t) = e^{-i t \sqrt{\Delta_{\Omega}}}$ here  (not $\cos t \sqrt{\Delta_{\Omega}}$ or $\frac{ \sin t \sqrt{\Delta_{\Omega}} }{ \sqrt{\Delta_{\Omega}} }$ )  the bicharacteristic  flow has only one direction in time.  Consequently, we drop the $_{\pm}$-notation. Let us  now describe $\tilde{\Gamma}^j$ in more detail.

Let $g^{t}$ denote the geodesic flow in $\mathbb{R}^n$.  We define the \textit{first impact time} of $(x, \xi)$ with $\partial \Omega$ to be $t^1(x,\xi) = \inf \{ t>0: \pi(g^{ t}(x, \xi)) \in \partial \Omega \}$.    Maintaining the notation from \cite{Ch}, we define the \textit{first billiard impact} as $\lambda^1(x, \xi) = g^{t^1(x,\xi)}(x, \xi) \in T^*_{\partial \Omega} \mathbb{R}^n$.  Let $\widehat{(x,\xi)} \in T^*_{\partial \Omega} \mathbb{R}^n$ denote the reflection of $(x, \xi) \in T^*_{\partial \Omega} \mathbb{R}^n$ at the boundary $\partial \Omega$ (the sign of the normal component of $\xi$ is changed under the reflection).   
One can inductively define $t^j$ and $\lambda^j(x, \xi)$  for any $j>1$ by setting  
$t^j(x,\xi) := \inf\{t>0: \pi(g^{t}(\widehat{\lambda^{j-1}(x,\xi)})) \in \partial \Omega \}$ and $\lambda^j(x, \xi) = g^{t^{j}(x,\xi)}(\widehat{\lambda^{j-1}(x, \xi)}) \in T^*_{\partial \Omega} \mathbb{R}^n$.  Furthermore, we are now able to set the \textit{j-th impact time} as $T^{j} (x, \xi)= \sum_{k=1}^j t^k(x, \xi)$ for $j > 0$; these are the exact times of the $j$-th billiard impacts.
Using this notation, we can write the multilink Lagrangian $\tilde{\Gamma}^j$ as 
\begin{equation} \label{multilinklag} \tilde{\Gamma}^j = 
\begin{cases} 
\{(t,\tau, g^{t}(x, \xi), x, \xi) \ : \ \tau =  |\xi|,  \, 0 \leq t < T^1(x,\xi) \},  & \mbox{if } j= 0 \\
\{ (t,\tau, g^{(t - T^j(x, \xi))} \widehat{\lambda^j(x, \xi)}, x, \xi) \ : \ \tau =  |\xi|, \,    T^j(x,\xi) \leq t < t^1( \widehat{\lambda^j(x, \xi)} ) \}, & \mbox{if } j\in \mathbb{N}. 
\end{cases}
\end{equation}
The multilink Lagrangian $\tilde{\Gamma}= \cup_{j \in \Z} \tilde{\Gamma}^j$ given by (\ref{multilinklag})  may be viewed as the graph of the billiard flow 
$G^t$. 
\subsection{The billiard ball map}
\label{bbm}
To describe the billiard ball map $\beta$, we introduce Fermi normal coordinates $(q,x_n)$ along $\partial \Omega$ (see \cite{HZ}).  Let $\xi=(\xi', \xi_n) \in T^*_{(q,x_n)} \mathbb{R}^n$ be the dual fiber coordinates.  Hence, we can write points of $T^*_{\partial \Omega} \mathbb{R}^n$ in the form $(q,0, \xi', \xi_n)$ and set $\tau := \sqrt{|\xi'|^2 + \xi_n^2}$. 

We can now define the \textit{billiard map} $\beta$, and its iterates $\beta^j$, by relating it to the multilink canonical relations $\tilde{\Gamma}^j$ and the broken geodesic flow via the  formula
\begin{equation}
G^{T^j(q,0;\xi)}(q,0,\xi', \xi_n) = (\tau \beta^j(q, \frac{\xi'}{\tau}), \tilde{\xi_n}(q,\xi',\xi_n))
\end{equation}
for $\tilde{\xi_n}(q,\xi',\xi_n)=\tau\sqrt{1-|\beta^j(q, \frac{\xi'}{\tau})|^2}$.

Let us recall that  two different, but closely related fixed point sets are used throughout the paper.  The first is the set of periodic trajectories of length $T$, denoted in Definition \ref{microclean} by $\bar{\Lambda}_T$. It is the fixed point set  of the broken geodesic flow $G^t$ on $S^* \Omega$.  The second is the set of periodic points of the billiard map on $B^* \partial \Omega$ of length $T$, which we denote by $\Lambda_{T}.$  The relation between these two sets was explained in Subsection \ref{sec:main}. For more details on this correspondence, see \cite{GM2}.

\section{Semiclassical  analysis of the boundary wave trace} \label{guts}
\subsection{Basic definitions}
We follow the argument in \cite[Section 2.6]{HZ} with the appropriate semiclassical modifications.  These are necessary because of the fact that the restriction operator $\gamma_{\partial \Omega}: C^{\infty}(\Omega) \to C^{\infty}(\partial \Omega)$ is not a well-defined homogeneous FIO microlocally near  $ 0_{T^*\partial \Omega} \times N^*\partial \Omega \subset T^*\partial \Omega \times T^*\Omega.$   
 We put in the eigenfunction cutoffs in (\ref{cutoff0}) and (\ref{localize}) and apply the compactly supported $h$-FIO calculus in Section \ref{hFIO} to circumvent this 0-section issue.  As we have already pointed out, unlike the case in \cite{HZ}, the conormal directions are very important in our analysis. In fact, to prove Theorem \ref{mainthm} in the case of  domains of constant width discussed in Section \ref{examples}, we will have to  additionally microlocalize the $E^b_{\kappa}(t)$ to arbitrarily small neighbourhods of the zero section $0_{B^*\partial \Omega} \in B^*\partial \Omega$ corresponding to near conormal directions to $\partial \Omega.$  
 
 We briefly review here the basic analogues to the homogeneous theory, emphasizing the  modifications that arise by passing to semiclassical calculus.  The symbol computations are nevertheless essentially the same as in the homogeneous setting since the cutoff $h$-FIO's all have homogeneous phases and the associated Lagrangians  are now bounded subsets of the conic Lagrangians in the homogeneous theory. In particular, checking cleanliness of intersections involves the same computations as in the homogeneous theory. For further details, we refer to \cite[Section 2.3 \& 2.4]{HZ} and \cite[Section 7]{TZ2}.

We now give more details on the wave parametrices. The Dirichlet half wave operator $U_{\Omega}(\cdot): C^{\infty}(\Omega) \rightarrow C^{\infty}(\Omega)$ has Schwartz kernel
$$U_{\Omega}(t,x,y) = \sum_{j=1}^{\infty} e^{-i t \lambda_j} \phi_{\lambda_j}(x) \phi_{\lambda_j}(y)$$
and, as is well known, $U_{\Omega}(\cdot)$ has a distributional trace in $t \in \R$ in the sense that for $\hat{\rho} \in {\mathcal S}(\R),$
$$\int_{\R} \hat{\rho_T}(t) U_{\Omega}(t,x,y) dt \in {\mathcal D}'(\Omega \times \Omega).$$
 Let $\gamma_{\partial \Omega}:\Omega \to \partial \Omega$ be the restriction map. We extend the exterior normal derivative $\partial_{\nu}$ to a $C^{\infty}$ vector field $\partial_{\nu}(x)$ in a neighbourhood $U$ of $\partial \Omega$ in $\R^n.$ As in \cite{HZ}, we let $n^D:= \gamma_{\partial \Omega} \partial_{\nu}: C^{\infty}(\Omega) \to C^{\infty}(\partial \Omega)$ be the Dirichlet boundary trace operator and let  $\tilde{\kappa}(q,D_q,D_t) \in Op(S^0_{cl}(T^*\partial \Omega \times T^* \R ) )$ be a homogeneous order zero pseudodifferential cutoff  defined as follows. For fixed $\epsilon >0,$ we introduce a cutoff function $\kappa$ with the following properties:
\begin{align} \label{supportassumption}
&(i) \, \kappa \in C^{\infty}_{0}(B^*\partial \Omega;[0,1]),  \nonumber \\
&(ii) \, \kappa(y,\eta) = 1  \, \text{ for} \,  (y,\eta) \in \Lambda_T \cup \Lambda_T^{-1},  \nonumber \\
& (iii) \, \kappa(y,\eta) =0 \, \text{for all periodic points of the billiard flow} \, (y,\eta) \notin \Lambda_T \cup \Lambda_T^{-1}.
 \end{align}  
We note that such a cutoff exists in view of the admissibility assumption in Definition \ref{microclean}.  Here $\Lambda_T^{-1}$ denotes the fixed point set of the billiard ball map corresponding to the same trajectories as in $\tilde{\Lambda}_T$, but traversed in the opposite direction. 
  
  For $(q,\eta,t,\tau) \in T^*\partial \Omega \times T^*\R-0,$ we let $\tilde{\kappa}$ be a positive homogeneous order zero cutoff with
 \begin{equation} \label{homsc}
 \tilde{\kappa}(q,\eta,\tau) = \kappa (q, \frac{\eta}{\tau}). \end{equation}

  Then,  in view of the key formula (\ref{key}), one forms  the composite operator $E^b_{F,\tilde{\kappa}}(\cdot): C^{\infty}(\partial \Omega) \rightarrow C^{\infty}(\partial\Omega \times \R)$ 
defined by
\begin{equation} \label{composite}
E^b_{F,\tilde{\kappa}}(t)= \tilde{\kappa}(q,D_q,D_t)  F(q) \gamma_{\partial \Omega} \partial_{\nu} U_{\Omega}(t) \partial_{\nu}^* \gamma_{\partial \Omega}^* \tilde{\kappa}^*(q,D_q,D_t), \end{equation}
where
\begin{equation} \label{key2} \begin{array}{ll}
  \int_{\R} \hat{\rho}(t) e^{it \lambda} \, Tr_{\partial \Omega} E_{F,\tilde{\kappa}}^{b}(t) \, dt = \sum_{j} \rho_T(\lambda_j-\lambda) \| \sqrt{F(q)}\tilde{\kappa}(q,D_q,\lambda_j) u_j^b \|_{L^2(\partial \Omega)}^2 + {\mathcal O}(\lambda^{-\infty}).  \end{array} \end{equation}

As we remarked in the introduction, the assumption that $\Omega$ is star-shaped relative to $0 \in \Omega$ implies that $F(q) = \langle q, \nu(q) \rangle  \geq 0.$
The operator $E^b_{F,\tilde{\kappa}}(t)$ is only a homogeneous FIO when appropriately microlocalized away from the tangential and normal directions, and so the latter microlocal condition  is a necessary requirement in order to apply the composition theorems for homogeneous FIOs to compute $\lambda \to \infty$ asymptotics in (\ref{key2}). We assume that the $\Lambda_T$ contain co-vectors transversal to the boundary (i.e. non-tangential). As in \cite{HZ, TZ1}, we are able to microlocalize away from the glancing set.  However, to make $E^b_{\tilde{\kappa}}(t)$ an actual homogeneous FIO in the sense of H\"ormander, one also has to microlocalize away from the normal codirections to the boundary. In certain  examples (such as the domain of constant width), the relevant fixed point sets $\Lambda_T$ consist {\em entirely} of bouncing-ball conormal vectors to the boundary; therefore, we are not free to carry out the latter microlocalizations to $E^b_{\tilde{\kappa}}(t).$ In order to deal with this point, we instead work in the semiclassical setting of compactly supported $h$-FIOs described in Section \ref{hFIO}.

Indeed, from now on, we let $h = \lambda^{-1}$ and $h_j = \lambda_j^{-1}$, for  $j \in \mathbb{N}$ and using the $WF_h$-localization of the eigenfunctions and their boundary traces in (\ref{loc}) and  (\ref{localize}),  we instead replace $E^b_{F,\tilde{\kappa}}(t)$ by an appropriate semiclassical family of compactly supported $h$-FIOs $E^b_{F,\kappa}(t,h):C^{\infty}(\partial \Omega) \to C^{\infty}(\partial \Omega) $ in the sense of Section \ref{hFIO}, defined in (\ref{cutoff operator}), with the property that
\begin{align} \label{approx}
\sum_{j} \rho_T(h_j^{-1}- h^{-1}) \langle F(q) \tilde{\kappa}(q,D_q,h_j^{-1}) u_j^b, \tilde{\kappa}(q,D_q,h_j^{-1}) u_j^b \rangle_{L^2(\partial \Omega)}^2  \nonumber \\
\sim_{h \to 0^+} \int_{\R} \hat{\rho_T}(t) e^{it/h} \, Tr_{\partial \Omega}E^b_{F,\kappa}(t,h) \, dt. \end{align}
This is precisely the content of Lemma \ref{scapproximation}. The point is that one can then use composition theorems and symbol calculus \cite{GuSt} for such compactly-supported $h$-FIOs to compute the asymptotics of the RHS in (\ref{approx}), bypassing the issue of conormal sets to the boundary that arises in the homogeneous setting.

\subsection{Dirichlet boundary trace} The operator $n^{D}: C^{\infty}(\Omega) \to C^{\infty}(\partial \Omega)$ given by $n^D: u \mapsto \gamma_{\partial \Omega} \partial_{\nu} u$  is not a homogeneous FIO microlocally near $0_{B^*\partial \Omega} \times N^*\partial \Omega$, where $0_{B^*\partial \Omega} = \{(q,0) \in B^*\partial \Omega \}$.  To compensate for this, we consider the semiclassically cutoff operators $n^D(h): C^{\infty}(\Omega) \to C^{\infty}(\partial \Omega)$ with
\begin{equation} \label{cutoff1}
n^{D}(h):= \chi_1(q,h D_q) n^{D} \chi_0(x,h D_x),\end{equation}
where $\chi_{0} \in C^{\infty}_{0}(T^*\R^n)$  (resp. $\chi_1 \in C^{\infty}_{0}(T^*\partial \Omega) )$) are the cutoff functions in (\ref{cutoff0}) (resp. (\ref{cutoff2})). In terms of Fermi coordinates
$$ n^{D}(h)(q,y) = (2\pi h)^{-n} \int_{\R^n} e^{i \langle q-y', \eta' \rangle/h} e^{-i y_n \eta_n/h}  \, i \eta_n \, \chi_1(q,\eta') \chi_0(y,\eta) \, (1 + {\mathcal O}_{{\mathcal S}}(h) )\, d\eta' d\eta_n.$$
 The phase function $\phi(q,y,\eta) = \langle q-y',\eta' \rangle - y_n \eta_n$ is non-degenerate with critical manifold $C_{\phi}= \{(q,y,\eta); q=y', y_n=0\}$ and the  associated Lagrangian is $\Gamma_{\partial \Omega} := \{ (q,\eta';q,\xi) \in \text{supp} \chi_1 \times \text{supp} \chi_0; \,  \xi|_{T_q \partial \Omega} = \eta' \}$, which is just a bounded subset of the corresponding conic Lagrangian in the homogeneous case.   
 
From Section \ref{hFIO}, it is easily checked that
\begin{align*}   
n^D(h) \in \mathcal{F}_0^{-1/2, -\infty}(\partial \Omega \times \Omega; \Gamma_{\partial \Omega}) \end{align*}
Moreover, as in the homogeneous case, in terms of the parametrization $ \iota_{\phi}: T^{*,+}_{\partial \Omega} \Omega \to T^*\partial \Omega \times T^*\Omega$ given by $\iota(q,\eta',\eta_n) = (q,\eta';q,\eta',\eta_n),$  one computes the symbol to be
\begin{align*} 
\sigma (n^D(h))(q,\eta) = i \,  \chi_1(q,\eta') \chi_0(q, \eta) \, \eta_n \, |   dq d\eta' d\eta_n |^{\frac{1}{2}}.
\end{align*}
Consequently, for any $\kappa(h) \in Op_{h}(S^{0,-\infty}),$ $\kappa(q,h D_q) \circ n^D(h) \in \mathcal{F}_0^{-1/2, -\infty}(\partial \Omega \times \Omega; \Gamma_{\partial \Omega}) $ with
\begin{equation}\label{symbol1} 
\sigma ( \kappa(h) n^D(h) )(q,\eta) = i \,  \chi_1(q,\eta') \kappa(q,\eta') \chi_0(q,\eta) \, \eta_n \, |   dq d\eta' d\eta_n |^{\frac{1}{2}}.
\end{equation}

As for the adjoint,
\begin{align} \label{symbol1.1}
n^D(h)^* \in  \mathcal{F}_0^{0, -\infty}(\Omega \times \partial \Omega; (\Gamma_{\partial \Omega})^* ) \end{align}
with symbol
\begin{equation} 
\sigma (n^D(h)^*)(q,\eta) =- i \,  \chi_1(q,\eta') \chi_0(q,\eta) \, \eta_n \, |   dq d\eta' d\eta_n |^{\frac{1}{2}}. \end{equation}
We note that by (\ref{loc}), (\ref{resloc}), and a rescaling of the frequency variables $\xi \to h h_j^{-1} \xi,$  one gets that for $h_j$ with $h h_j^{-1} \sim 1$ as $h \to 0^+,$ 
\begin{align} \label{trace1}
n^{D} (1-\chi_{0}(h)) \phi_{h_j} = n^{D} [ 1- \chi_{0} ( (h h_j^{-1}) h_j ) ]\phi_{h_j} = {\mathcal O}_{{\mathcal S}}(h_j^{\infty} )= {\mathcal O}_{{\mathcal S}}(h^{\infty}). \end{align}
Indeed, by Taylor expansion around $h h_j^{-1} =1,$
$$\chi_0(x, h h_j^{-1} \xi) = \chi_0(x, \xi) + R(x,\xi;h h_{j}^{-1})$$
with
$$ R(x,\xi;h h_j^{-1}):=  (h h_j^{-1}-1) \langle \xi, \nabla_{\xi} \chi_0 (x, \zeta(x,\xi,hh_{j}^{-1}) \xi ),$$
with $\zeta \in C^{\infty}( \text{supp} \chi_0 \times (1/2,3/2)).$ 
When $h h_{j}^{-1} \sim 1$ it follows that $R(x,\xi; h h_j^{-1}) =0$ when $|\xi| =1.$  So, by a standard parametrix construction as in Section 3.3  one shows that when $h h_j^{-1} \sim 1,$
$$ Op_{h} ( R(x,\xi;h h_j^{-1}) )  \phi_h = {\mathcal O}_{\mathcal S}(h^{\infty}).$$
Similar reasoning shows that
\begin{align} \label{trace2}
(1-\chi_1(h)) n^{D}  \phi_{h_j} = [1-\chi_1( (h h_j^{-1}) h_j ) ] n^{D} \phi_{h_j} = {\mathcal O}_{{\mathcal S}}(h_j^{\infty}) = {\mathcal O}_{{\mathcal S}}(h^{\infty}). \end{align}
It follows that when $h h_j^{-1} \sim 1$ as $h \to 0^+,$
\begin{equation} \label{chop1}
n^{D} \phi_{h_j} = n^{D}(h) \phi_{h_j} + {\mathcal O}(h^{\infty}).\end{equation}
We note that unlike the homogeneous case, in the compact semiclassical setting, the zero section $0_{B^*\partial \Omega}$ has no special significance.

\subsection{Cut-off Dirichlet wave operators}
As in the homogeneous case, we  first make a cutoff away from the glancing set $\mathcal{G} = \{(q,x_n;\xi',\xi_n) \in T^*_{\partial \Omega}(\Omega); x_n=0, \xi_n =0 \}.  $ Let $\chi_{\gcal} \in C^{\infty}_{0}(T^*\Omega)$ with $\chi_{\gcal} (q,x_n;\xi',\xi_n) = (1-\chi_{\delta})(|\xi_n|) \chi(x_n)$ where $\chi \in C^{\infty}_0(\R)$ is a standard cutoff as in (\ref{cutoff0}) and $\chi_{\delta} \in C^{\infty}_0(\R)$ has support in the set $\{ \xi_n; |\xi_n| \leq 2 \delta \}$ with $\chi_{\delta}(|\xi_n|) = 1 $ when $ |\xi_n| < \frac{3\delta}{2}.$  Then, in analogy with \cite[Section 11]{TZ2}, we define the semiclassically cutoff wave operators 
\begin{equation} \label{wave0}
U_{\Omega}(t,h) := \tilde{\chi}_{\mathbb{R}}(t, h D_t) [ \chi_{\gcal}((x,h D_x) \chi_0(x, h D_x) U_{\Omega}(t) \chi_0(x, h D_x) \chi_{\gcal}(x,h D_x)],
\end{equation}
where $\tilde{\chi}_{\mathbb{R}}(t,t', h) := (2 \pi h)^{-1} \int_{\mathbb{R}} e^{i(t-t')\tau/ h} \tilde{\chi}(\tau) \hat{\rho_T}(t) \, \ d \tau.$  Here $\tilde{\chi} \in C^{\infty}_0(\R)$ with $\tilde{\chi} = 1$ in a neighbourhood of $\tau =1.$   The usual Chazarain parametrix construction \cite{Ch} gives
\begin{equation} \label{wave1}
U_{\Omega}(t;h) = \sum_{j=1}^{\infty} U_{\Omega}^{j}(t;h) +{\mathcal O}_{L^2 \to L^2}(h^{\infty}), \end{equation}
where $WF'_{h} \, U_{\Omega}^j(t;h) \subseteq \tilde{\Gamma}^{j}_{0},$  and $\tilde{\Gamma}_{0}^{j}:= \{ (x,\xi,y,\eta) \in \tilde{\Gamma}^{j}; (x,\xi) \in \text{supp} \chi_0, (y,\eta) \in \text{supp} \chi_0 \}$  is the canonical relation associated to the graph of the broken geodesic flow with $j$ boundary reflections cutoff near $S^*\Omega \times S^*\Omega$; we refer to Section \ref{multilink} and \cite[Section 2.3]{HZ} for the formal definition and background on canonical relations $\tilde{\Gamma}^{j}$.
In fact, the sum on the RHS of (\ref{wave1}) is  finite with the number of terms less than $N(t,\delta) <\infty$ depending on $(t,\delta) \in \R \times \R^+.$ We note that residual smoothing kernels $K(\cdot) \in C^{\infty}(\R \times \Omega \times \Omega)$ that arise in the usual homogeneous Chazarain consruction yield the ${\mathcal O}_{L^2 \to L^2}(h^{\infty})$ remainder on the RHS of (\ref{wave1}). Indeed, by integration-by-parts, it is easily verified that for any smoothing operator  with kernel $K(\cdot),$
$$ \| \tilde{\chi}_{\mathbb{R}}(t, h D_t) \big[ \chi_{\gcal}((x,h D_x) \chi_0(x, h D_x) K(\cdot)\chi_0(x, h D_x) \chi_{\gcal}(x,h D_x) \big] \|_{L^2 \to L^2}  = {\mathcal O}(h^{\infty}).$$
Using (\ref{wave0}) and (\ref{wave1}), it follows that
\begin{equation}
U_{\Omega}(\cdot,h) \in \mathcal{F}_0^{1/2, -\infty}( (\R \times \Omega)\times \Omega; \tilde{\Gamma}_{0}),
\end{equation}  
for $\tilde{\Gamma}_{0} = \bigcup_{j \in \mathbb{Z}} \tilde{\Gamma}^{j}_{0}.$

\subsection{The semiclassical boundary trace operator $E_{F,\kappa}^{b}(t;h)$}
  
Given $\kappa  \in C^{\infty}_0(B^*\partial \Omega),$ we define the semiclassically cutoff operators
\begin{equation} \label{cutoff operator}
E^{b}_{F,\kappa}(t;h) := F(q) \kappa(q,h D_q) n^D(h) U_{\Omega}(t;h) n^{D}(h)^* \kappa(q,h D_q)^*.\end{equation}
For $(q,0,\eta,\xi_n) \in T^{*}_{\partial \Omega}(\R^n)$, we set $\tau := \sqrt{ |\eta|^2 + \xi_n^2}$ and consider the parametrization 
$\iota_{+}^{j}: \R^+ \times T^*\partial \Omega \to T^*(\R \times \partial \Omega \times \partial \Omega)$ as
\begin{equation}\label{param}
\iota_{+}^{j}(\tau,q,\eta) = ( T^j(q,\xi(q,\eta,\tau)),  \tau,  \tau \beta^j(q,\frac{\eta}{\tau}), q, \eta),\end{equation}
where $\xi(q, \eta, \tau) = \eta + \xi_n \nu_q$ and $| \eta|^2 + \xi_n^2 = \tau^2$.
The following proposition is central to our result.
\begin{prop} \label{mainprop1}
For any $\kappa \in C^{\infty}_{0}(B^*\partial \Omega),$
 $$ E^b_{F,\kappa}(\cdot;h) \in {\mathcal F}^{0, -\infty}_0((\mathbb{R}  \times \partial \Omega) \times \partial \Omega; \Gamma)$$ 
 where $\Gamma = \bigcup_{j \in \mathbb{Z}} \Gamma^{j}$ with
\begin{eqnarray}
\Gamma^{j}&:=& \{ (t, \tau, q, \zeta, q', \zeta ') \in T^*(\mathbb{R} \times \partial \Omega \times \partial \Omega) : \exists \xi \in T_q^*\mathbb{R}^n\, \xi' \in T_{q'}^*\mathbb{R}^n \text{ with }  \\
\nonumber &&(t, \tau, q, \xi, q', \xi') \in \Gamma^j, \xi|_{T_q\partial \Omega} = \zeta,  \xi'|_{T_{q'}\partial \Omega} = \zeta'; \, (q,\zeta) \in \text{supp} \, \kappa \text{ and } (q',\zeta') \in \text{supp} \, \kappa \}.
\end{eqnarray}
Moreover, in terms of the parametrization in (\ref{param}), the symbol of $E^b_{F,\kappa}(\cdot;h)$ restricted to $\Gamma^{j}$ is given by
\begin{align} \label{symbol2}
\sigma( E^b_{F,\kappa}(\cdot;h) )(q,\eta,\tau) &= C_{j}^D  F(q) \, \kappa(q, \frac{\eta}{\tau})  \kappa( \beta^j(q, \frac{\eta}{\tau}) )   \gamma^{1/2}(q,\eta,\tau) \gamma^{1/2}(\tau \beta^j(q,\frac{\eta}{\tau}),\tau ) \,\tau^2 \,\, |dq d\eta d\tau|^{1/2} 
\end{align}
for $\gamma=\sqrt{1-\frac{|\eta|^2}{\tau^2}}$.  In (\ref{symbol2}) $C_{j}^D \neq 0$ are certain non-zero constant Maslov factors.
\end{prop}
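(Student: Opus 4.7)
The plan is to realize each factor in \eqref{cutoff operator} as a compactly supported semiclassical FIO with known order, canonical relation and principal symbol, and then to apply the clean composition calculus for $h$--FIOs from \cite[Chapter 8]{GuSt} to extract the principal symbol along each piece $\Gamma^j$. The ingredients are already in place: $F(q)$ and $\kappa(q,hD_q)$ are $h$--pseudodifferential operators of order $0$ with symbols $F(q)$ and $\kappa(q,\eta)$; $n^D(h)$ and $n^D(h)^*$ are compactly supported $h$--FIOs of orders $-1/2$ and $0$, associated to $\Gamma_{\partial \Omega}$ and its inverse, with symbols given in \eqref{symbol1}--\eqref{symbol1.1}; and by the Chazarain parametrix \eqref{wave1}, $U_\Omega(t;h) = \sum_j U_\Omega^j(t;h)$ modulo $\mathcal{O}_{L^2\to L^2}(h^\infty)$, with each $U_\Omega^j(t;h) \in \mathcal{F}_0^{1/2,-\infty}$ carried by the multilink relation $\tilde\Gamma_0^j$ and symbol equal to the canonical half--density on $\tilde\Gamma^j$ multiplied by the Maslov factor accumulated at the $j$ reflections.

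Next I would verify that the successive compositions
\[
\kappa(q,hD_q) \circ n^D(h) \circ U_\Omega^j(t;h) \circ n^D(h)^* \circ \kappa(q,hD_q)^*
\]
intersect cleanly with excess zero. Away from $\gcal$, the restriction map to $\partial \Omega$ is a local diffeomorphism on $B^*\partial \Omega$ and the broken geodesic flow meets the boundary transversally at each impact, so the successive fiber products are transverse and smooth. The support hypotheses \eqref{supportassumption} on $\kappa$ confine the interaction to $B^*\partial\Omega$ away from the glancing set; crucially, because we are composing compactly supported $h$--FIOs rather than homogeneous FIOs, $\kappa$ is allowed to include the zero section $0_{B^*\partial \Omega}$. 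This removes the obstruction encountered in the homogeneous setting of \cite{HZ} and is essential for the near--conormal sets $\Lambda_T$ arising for domains of constant width. Tracing through the fiber products with the parametrization \eqref{param} and the description of $\tilde\Gamma^j$ from Section \ref{multilink} identifies the composite canonical relation with $\Gamma^j$ as stated.

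With cleanliness established, the orders simply add:
\[
0 + 0 + (-\tfrac12) + \tfrac12 + 0 + 0 = 0,
\]
so $E^b_{F,\kappa}(\cdot;h) \in \mathcal{F}_0^{0,-\infty}((\R \times \partial \Omega)\times \partial \Omega; \Gamma)$ with $\Gamma = \bigcup_j \Gamma^j$. To obtain the principal symbol on $\Gamma^j$, I would multiply the pulled--back principal symbols along the composition. The two factors of $\eta_n$ in $\sigma(n^D(h))$ and $\sigma(n^D(h)^*)$ evaluate at the outgoing and incoming boundary impacts to $\tau\gamma(q,\eta,\tau)$ and $\tau\gamma(\tau\beta^j(q,\eta/\tau),\tau)$ respectively; when combined with the half--density transition factors produced by restriction of the wave kernel to $\partial \Omega$ (which, exactly as in \cite[Sec.~2]{HZ}, carry the inverse square roots of the same glancing factors) one obtains the product $\tau^2 \gamma^{1/2}(q,\eta,\tau)\gamma^{1/2}(\tau\beta^j(q,\eta/\tau),\tau)$ in \eqref{symbol2}. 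The cutoffs contribute $\kappa(q,\eta/\tau)\kappa(\beta^j(q,\eta/\tau))$ via \eqref{homsc}, the weight $F$ is evaluated at $q$, and the $j$ reflections accumulate the nonzero Maslov factor $C_j^D$ computed exactly as in the homogeneous case.

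The main obstacle, as I see it, is the careful bookkeeping of half--densities and Maslov indices along the clean composition, in particular reconciling the Jacobians produced by the boundary restriction with the square--root glancing factors so that the final expression collapses precisely to \eqref{symbol2}. All the required dynamical input --- smoothness of $\beta$ on $\Lambda_T$, transversality to the boundary, and absence of glancing --- is furnished by the admissibility conditions of Definition \ref{microclean}; the only genuinely new analytic point compared with \cite{HZ} is that the compactly supported $h$--FIO calculus lets the composition proceed across the zero section of $T^*\partial \Omega$.
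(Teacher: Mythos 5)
Your proposal is correct and follows essentially the same route as the paper, which simply refers the reader to \cite[Proposition 4]{HZ} together with the $h$--FIO composition calculus of Section~\ref{hFIO}; you have spelled out the factor-by-factor bookkeeping (orders, canonical relations, glancing factors, Maslov phases) that the paper leaves implicit. The only small point the paper adds that you leave tacit is that the auxiliary cutoffs $\chi_1$ and $\chi_{\gcal}$ are identically $1$ on $\operatorname{supp}\kappa$ and hence drop out of the principal symbol, leaving only the $\kappa(q,\eta/\tau)\,\kappa(\beta^j(q,\eta/\tau))$ factor you record.
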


\begin{proof} The proof follows  as in \cite[Proposition 4]{HZ} with the revised $h$-FIO operator composition formulas in Section \ref{hFIO}. We note that since $\chi_1|_{B^*\partial \Omega} = 1,$ the cutoff factors $\chi_1(q, \frac{\eta}{\tau}) = \chi(q, \frac{\eta}{\sqrt{|\eta|^2 + \xi_n^2}}) =1$ and therefore drop out of the expression for the symbol. The same is true for the operator $\chi_{\gcal}(h)$, since $\chi_{\gcal} =1$ on supp $\kappa.$ \end{proof}
\subsection{Conormal expansion near $t=T$}
Given Proposition \ref{mainprop1}, the conormal expansion for $ Tr_{\partial \Omega} E^b_{F,\kappa}(\cdot)$ then follows essentially in the same way as in the homogeneous case except that one needs to semiclassically cutoff the homogeneous FIO $\pi_* \Delta^*: C^{\infty}(\R \times \partial \Omega \times \partial \Omega) \to C^{\infty}(\R).$ Following the argument above, for the diagonal embedding $\Delta_{\partial \Omega}: C^{\infty}(\partial \Omega \times \partial \Omega) \to C^{\infty}(\partial \Omega) $ with $\Delta_{\partial \Omega}f(q) = f(q,q),$ we define
 $$\Delta_{\partial \Omega}(h):= \chi_1(q,h D_q)\, \Delta_{\partial \Omega} \,\chi_1(q,h D_q).$$
 Since the kernel is of the form
 $$\Delta_{\partial \Omega}(h)(q,q') = (2\pi h)^{-2(n-1)} \int_{\R^{2(n-1)}} e^{ i \langle q-q^{(1)},\xi \rangle + i \langle q - q^{(2)}, \eta \rangle /h}  \chi_1(q,\xi) \chi_1(q,\eta) \, d\eta d\xi + {\mathcal O}_{L^2 \to L^2}(h),$$
it is easily checked that  $\Delta_{\partial \Omega}(h) \in {\mathcal F}^{-(n-1)/2,-\infty}_{0}(\partial \Omega  \times (\partial \Omega \times \partial \Omega); \Gamma_{\Delta}^{sc})$ with
  the associated canonical relation $\Gamma_{\Delta}^{sc} = \{ (q,\eta + \xi ; q,\eta,q,\xi) \in T^* \partial \Omega \times T^*(\partial \Omega \times \partial \Omega); \,\, (q,\eta +\xi) \in \text{supp}\, \chi_1, \, (q,\eta) \in \text{supp}\, \chi_1, \text{ and } (q,\xi) \in \text{supp}\, \chi_1 \}.$ 
 
Similarily, the fiber integration operator $\pi_*: C^{\infty}(\partial \Omega \times \R) \to C^{\infty}(\R)$ with $\pi_* f(q) = \int_{\partial \Omega} f(q,t) dq$ gets replaced by
 $$ \pi_*(h):= \chi (t-T) \,\chi_1(q,h D_q) \, \pi_{*} \, \chi_1(q,h D_q)$$
 where $\chi(t-T)=1 $ when $t \in \text{supp} \, \hat{\rho_T}.$
 One verifies that $\pi_*(h) \in {\mathcal F}_0^{(n-1)/2,-\infty}( \R \times (\R \times \partial \Omega); \Pi),$ where $\Pi= \{ (t,\tau,t,\tau,q,0) \in T^*(\R \times (\R \times \partial \Omega), t \in \text{supp} \, \tilde{\chi}, \tau \in \text{supp} \, \tilde{\chi} \}.$  Proposition \ref{mainprop1} and the standard $h$ wave front calculus gives, when applied to the Schwartz kernel $E^b_{F,\kappa}(t,q,q';h) \in C^{\infty}(\R \times \partial \Omega \times \partial \Omega; (0,h_0])$,
 
 \begin{align*}
 WF_{h}' ( \pi_*(h) \Delta^*_{\partial \Omega}(h) E^{b}_{F,\kappa}(t,q,q';h) ) \subset \{ (t,\tau) \in T^*(\R); \tau \in \text{supp} \, \tilde{\chi}, t \in \text{supp} \, \chi(\cdot -T), \\ G^t(q,\eta) = (q,\eta) \, \text{for some} \, (q,\eta) \in \text{supp} \, \kappa \}.\end{align*}

 Using the clean intersection calculus in Section \ref{hFIO}, the composition $(\pi_* \Delta_{\partial \Omega}^*) \circ E^b_{F,\kappa}(t,\cdot,\cdot;h)$ is clean with excess $d$   and so,
\begin{equation} \label{upshot11}
 Tr_{\partial \Omega} E^b_{F,\kappa}(t,h):=\pi_*(h) \Delta_{\partial \Omega}^*(h) E_{F,\kappa}^{b}(\cdot,h) \in {\mathcal F}_{0}^{-\frac{d}{2},-\infty}( \R; T^{*,sc}_T(\R)).\end{equation}
 
The corresponding  principal symbol equals \begin{equation} \label{final symbol}
 \sigma ( Tr_{\partial \Omega} E^b_{F,\kappa}(\cdot ,h))(\tau) =  c\, h^{-d/2} \tilde{\chi}(\tau) \, \tau^{d/2 + 2} \, |d\tau|^{1/2}, \end{equation}
 with $T^{*,sc}_T(\R) = \{ (T,\tau ); \tau \in \text{supp} \, \tilde{\chi} \}.$ 
 
 The following proposition characterizes the $h$-Fourier integral distribution $Tr_{\partial \Omega} E^b_{F,\kappa}(t;h)$ and follows from Proposition \ref{mainprop1} and (\ref{final symbol}).
 
 
  \begin{prop} \label{basiclemma}
 Suppose $\partial \Omega$ is a  $C^{\infty}$ bounded domain, and that the cutoff function $\kappa \in C^{\infty}_0(B^*\partial \Omega)$ satisfies 
\eqref{supportassumption}  with  $\Lambda_{T} $  a clean and connected manifold of dimension $d \leq 2n-2.$ Then, as an $h$-Lagrangian distribution,
 $$ Tr_{\partial \Omega} E^b_{F,\kappa}(t;h) = (2\pi h)^{-1-\frac{d}{2}} \, c_{_{\Lambda_T}} \int_{T^{*}_T(\R)} e^{i(t-T)\tau/h } \, \tau^{d/2 +2}( 1 + {\mathcal O}_{{\mathcal S}}(h) ) \tilde{\chi}(\tau) \, d\tau.$$
Here, 
\begin{equation}
\label{CLT}
c_{\Lambda_T} = c_0^D \int_{\Lambda_T\cup\Lambda_T^{-1}} F(q) \gamma(q,\eta) \, d\mu(q,\eta), 
\end{equation} 
where $c_0^D$ is a non-zero constant, $\gamma(q,\eta) = (1-|\eta|^2_{g(q)})^{\frac{1}{2}},$  $g(q)$ denotes the induced boundary metric at $q \in \partial \Omega$ and $d\mu$ is the measure on $\Lambda_T\cup \Lambda_T^{-1}$ induced by symplectic measure $|dq d\eta|$ on $B^*\partial \Omega.$
\end{prop}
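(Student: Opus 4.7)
My plan is to derive Proposition \ref{basiclemma} directly by packaging together Proposition \ref{mainprop1}, the clean composition calculus of Section \ref{hFIO}, and the symbol computation already indicated in (\ref{final symbol})--(\ref{upshot11}). The argument proceeds in three stages: identify the target as a conormal distribution to $\{t=T\}$ of the correct order, write it as an oscillatory integral along the normal fibre $T^*_T\mathbb{R}$, and then compute its principal symbol by pushing forward the symbol from (\ref{symbol2}) along the fixed-point set.

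First, Proposition \ref{mainprop1} places $E^b_{F,\kappa}(\cdot;h)\in\mathcal{F}^{0,-\infty}_0\bigl((\mathbb{R}\times\partial\Omega)\times\partial\Omega;\Gamma\bigr)$. I would next verify that the composition $\pi_*(h)\circ\Delta^*_{\partial\Omega}(h)\circ E^b_{F,\kappa}(\cdot;h)$ is clean with excess equal to $d=\dim\Lambda_T$, using the admissibility hypothesis (the cleanliness condition (ii) of Definition \ref{microclean}) together with the isolation condition (i) to ensure that the only contributions in a neighbourhood of $t=T$ come from $\Lambda_T\cup\Lambda_T^{-1}$. The image canonical relation is the conormal $T^*_T\mathbb{R}$, so the resulting h-Lagrangian distribution takes the desired order $-d/2$ (as recorded in (\ref{upshot11})) and can be written in the displayed oscillatory form with phase $(t-T)\tau/h$ and prefactor $(2\pi h)^{-1-d/2}$.

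The main computation is the principal symbol. By the clean composition formula, it is obtained by integrating the symbol in (\ref{symbol2}) over the fibres of the map sending the fixed-point set into $T^*_T\mathbb{R}$. On $\Lambda_T\cup\Lambda_T^{-1}$ one has $\beta^k(q,\eta/\tau)=(q,\eta/\tau)$, so $\kappa(\beta^k(q,\eta/\tau))=\kappa(q,\eta/\tau)=1$ by the support assumption (\ref{supportassumption})(ii), and the two factors $\gamma^{1/2}$ in (\ref{symbol2}) coalesce into a single $\gamma(q,\eta,\tau)$. The $\tau^2$ factor from (\ref{symbol2}) combines with the half-density pushforward of $|dq\,d\eta\,d\tau|^{1/2}$ along the $d$-dimensional fibre $\Lambda_T\cup\Lambda_T^{-1}$ to produce the factor $\tau^{d/2+2}\tilde\chi(\tau)\,|d\tau|^{1/2}$ appearing in (\ref{final symbol}), while the geometric integrand collapses into the measure $d\mu$ induced by $|dq\,d\eta|$. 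Collecting the Maslov constants $C^D_j$ (which are the same on each reflection branch contributing to $\Lambda_T\cup\Lambda_T^{-1}$ because the trajectories all have the same combinatorial type and the same number $k$ of reflections) into a single nonzero constant $c_0^D$, one obtains the coefficient $c_{\Lambda_T}$ as in (\ref{CLT}).

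The step I expect to be the most delicate is the half-density/symbol bookkeeping in the last paragraph: verifying that the pushforward of the symbol in (\ref{symbol2}), taken along the cleanly intersecting fixed-point fibres, really does yield the $\tau^{d/2+2}\tilde\chi(\tau)$ factor together with the intrinsic Liouville-type measure $d\mu$, and that the Maslov contributions from the two orientations $\Lambda_T$ and $\Lambda_T^{-1}$ add (rather than cancel) to give a single nonzero $c_{\Lambda_T}$. Everything else is a direct reading of Proposition \ref{mainprop1}, the clean composition calculus reviewed in Section \ref{hFIO}, and (\ref{upshot11})--(\ref{final symbol}); the oscillatory-integral representation then follows from the standard normal form for h-Lagrangian distributions conormal to a point.
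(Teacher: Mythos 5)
Your proposal is correct and follows essentially the same route as the paper: the paper likewise obtains the statement by composing $E^b_{F,\kappa}(\cdot;h)$ from Proposition \ref{mainprop1} with the semiclassically truncated $\pi_*(h)$ and $\Delta^*_{\partial\Omega}(h)$, invoking the clean-intersection calculus with excess $d$ to land in $\mathcal{F}^{-d/2,-\infty}_0(\R;T^{*,sc}_T\R)$, and reading off the symbol \eqref{final symbol} with the Maslov contributions of $\Lambda_T$ and $\Lambda_T^{-1}$ adding (Remark \ref{remark:Maslov}). You have correctly identified the one genuinely delicate point — the half-density pushforward yielding $\tau^{d/2+2}\,|d\tau|^{1/2}$ and the non-cancellation of the two Maslov constants — which is precisely what the paper defers to the cited computation in Hezari--Zelditch.
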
 
\begin{rem}
\label{remark:Maslov}
 There is a   Maslov factor (the phase shift arising from the stationary phase calculation) embedded in the constant $c_0^D$.
It is a sum of the Maslov coefficients corresponding to  $\Lambda_T$ and $\Lambda_T^{-1}$, which are  nonzero and equal to each other (cf. \cite[Corollary 9]{HZ}). In particular, they do not cancel each other and hence $c_0^D \neq 0$. 
We also remark  that  the assumption in Proposition \ref{basiclemma} that $\Omega$ is star--shaped is important  since it implies that $F(q) \geq 0$ and so never changes sign. This  ensures that $c_{\Lambda_T} \neq 0.$ 
\end{rem}

\subsection{Semiclassical approximation to the boundary trace}
The following simple, but crucial, lemma shows that the semiclassical trace $ \int_{\R} e^{it/h} \hat{\rho_T}(t) Tr_{\partial \Omega} E^b_{F,\kappa}(t;h) \, dt$  is, for our purposes, a suitable approximation to the actual microlocalized boundary trace $$\int_{\R} e^{it/h} \hat{\rho_T}(t)Tr_{\partial \Omega} E_{F,\tilde{\kappa}}^{b}(t) dt.$$ The point here is to essentially  ``undo" the  mass cutoffs $\chi_0$ and $\chi_1$ in the expression $E^b_{F,\kappa}(t;h).$
\begin{lem} \label{scapproximation} Let $\Omega \subset \R^n$ be a smooth bounded domain and $\tilde{\kappa} \in C^{\infty}(T^*\partial \Omega \times T^*\R - 0)$ be a  positive homogeneous function satisfying (\ref{supportassumption}) with 
$\tilde{\kappa}(q,\eta,\tau) = \kappa(q,\frac{\eta}{\tau})$
for some $\kappa \in C_{0}^{\infty}(B^*\partial \Omega).$  Then, with the compactly supported $h$-FIO operators $E_{F,\kappa}^{b}(t;h): C^{\infty}(\partial \Omega) \to C^{\infty}(\mathbb{R} \times \partial \Omega)$ in (\ref{cutoff operator}),
$$\int_{\R} \hat{\rho_T}(t) e^{it/h} \, Tr_{\partial \Omega}E^b_{F,\tilde{\kappa}}(t) \, dt =   \int_{\R} \hat{\rho_T}(t) e^{it/h} \, Tr_{\partial \Omega}E^b_{F,\kappa}(t,h) \, dt  + {\mathcal O}_{\Omega}(h^{-n+2}). $$
\end{lem}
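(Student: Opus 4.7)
The plan is to expand both sides as spectral sums indexed by Dirichlet eigenvalues $\lambda_j = h_j^{-1}$ and match them term by term using the semiclassical concentration of the eigenfunctions and of their boundary normal derivatives. For the left-hand side I would apply formula (\ref{key2}) at $\lambda = h^{-1}$ together with the homogeneity identity $\tilde{\kappa}(q, D_q, \lambda_j) = \kappa(q, h_j D_q)$ to rewrite
\begin{equation*}
\int_{\R} \hat{\rho_T}(t)\, e^{it/h}\, Tr_{\partial\Omega} E^b_{F,\tilde{\kappa}}(t)\, dt \; = \; \sum_j \rho_T(\lambda_j - h^{-1})\, \bigl\| \sqrt{F(q)}\, \kappa(q, h_j D_q) u_j^b \bigr\|^2_{L^2(\partial\Omega)} + \mathcal{O}(h^\infty).
\end{equation*}

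For the right-hand side I would insert the spectral expansion $U_\Omega(t) = \sum_j e^{-it\lambda_j} |\phi_{\lambda_j}\rangle\langle \phi_{\lambda_j}|$ into formula (\ref{cutoff operator}). Since the pseudodifferential factor $\tilde{\chi}(hD_t)$ inside $\tilde{\chi}_{\R}(t, hD_t)$ acts on $e^{-it\lambda_j}$ by multiplication by $\tilde{\chi}(h\lambda_j)$, the time integral against $e^{it/h}\hat{\rho_T}(t)$ produces, writing $B(h) := \chi_{\gcal}(x, hD_x)\chi_0(x, hD_x)$,
\begin{equation*}
\int_{\R} \hat{\rho_T}(t)\, e^{it/h}\, Tr_{\partial\Omega} E^b_{F,\kappa}(t;h)\, dt \; = \; \sum_j \tilde{\chi}(h\lambda_j)\, r(\lambda_j - h^{-1})\, \bigl\| \sqrt{F}\, \kappa(q, hD_q)\, n^D(h)\, B(h)^* \phi_{\lambda_j} \bigr\|^2_{L^2(\partial\Omega)} + \mathcal{O}(h^\infty),
\end{equation*}
where $r$ is a Schwartz function essentially equal to $\rho_T$ on the relevant scale (depending on how the $\hat{\rho_T}$ factor inside $\tilde{\chi}_{\R}$ in (\ref{wave0}) is absorbed). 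For indices $j$ with $\rho_T(\lambda_j - h^{-1})$ not negligibly small one has $h h_j^{-1} \sim 1$, and three microlocal estimates then collapse the $j$th summand of the second sum onto the $j$th summand of the first: (i) by (\ref{chop1}), $\chi_0(x, hD_x)\phi_{\lambda_j} = \phi_{\lambda_j} + \mathcal{O}_{\mathcal{S}}(h^\infty)$, the crucial point being that $\chi_0 \equiv 1$ in a full neighbourhood of the energy shell $\{|\xi|_g = 1\}$; (ii) by (\ref{localize}) and an analogue of (\ref{trace2}), $\chi_1(q, hD_q) n^D\phi_{\lambda_j} = n^D\phi_{\lambda_j} + \mathcal{O}_{\mathcal{S}}(h^\infty)$; (iii) a Taylor expansion of $\kappa(q, \cdot)$ gives only $\kappa(q, hD_q) u_j^b - \kappa(q, h_j D_q) u_j^b = \mathcal{O}_{\mathcal{S}}(h)$, because $\kappa$ is not constant at the wavefront of $u_j^b$ and $|h_j - h| = \mathcal{O}(h^2)$.

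Assembling (i)--(iii), the discrepancy between the two spectral sums is dominated by the $\mathcal{O}(h)$ per-term error from (iii). The Weyl upper bound provides $\mathcal{O}(h^{-(n-1)})$ indices $j$ for which $\rho_T(\lambda_j - h^{-1})$ is not negligible, and the Rellich identity (\ref{Rellich}) bounds the norms $\|\sqrt{F}\, \kappa(q, h_j D_q) u_j^b\|_{L^2(\partial\Omega)}$ uniformly in $j$. The total discrepancy is therefore $\mathcal{O}(h) \cdot \mathcal{O}(h^{-(n-1)}) = \mathcal{O}(h^{-n+2})$, exactly as claimed. The main obstacle in this plan is precisely estimate (iii): since $\kappa$ is not elliptic on the wavefront of $u_j^b$, one cannot upgrade the discrepancy $h_j - h$ to an infinite-order gain as in (i)--(ii), and this is what forces the remainder to be $\mathcal{O}(h^{-n+2})$ rather than $\mathcal{O}(h^\infty)$. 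A secondary point is the treatment of the glancing cutoff $\chi_{\gcal}$ inside $B(h)^*$: the part of $\phi_{\lambda_j}$ that it discards has wavefront near the glancing set $\mathcal{G}$, but by Definition \ref{microclean}(iii) and the support condition (\ref{supportassumption}) on $\kappa$ this portion is annihilated to infinite order by the subsequent composition with $\kappa(q, hD_q)\, n^D(h)$, so it does not affect the bound.
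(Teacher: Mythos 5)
Your proposal is, in substance, the same as the paper's proof. Both sides are expanded into spectral sums, the homogeneity identity $\tilde\kappa(q,D_q,\lambda_j)=\kappa(q,h_jD_q)$ converts the homogeneous cutoff into a semiclassical one, the $WF_h$-localization of $\phi_{\lambda_j}$ and $u_j^b$ is used to insert the cutoffs $\chi_0,\chi_1,\chi_{\gcal}$ to infinite order, and the only per-term error that survives is the $\mathcal{O}(h)$ from the $h_j\to h$ rescaling of $\kappa$ (the paper's Taylor expansion \eqref{taylor}); multiplying by the $\mathcal{O}(h^{-(n-1)})$ Weyl count of relevant indices gives the stated $\mathcal{O}(h^{-n+2})$. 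The one small deviation: you invoke the Rellich identity \eqref{Rellich} for the uniform $L^2$ bound on $u_j^b$, which only yields $\|u_j^b\|_{L^2}=\mathcal{O}(1)$ when $F(q)=\langle q,\nu_q\rangle$ has a positive lower bound, i.e.\ in the strictly star--shaped case; the lemma as stated allows an arbitrary smooth bounded domain, so the paper instead cites the Hassell--Tao bound $\|u_j^b\|^2_{L^2(\partial\Omega)}=\mathcal{O}_\Omega(1)$, which is unconditional. For the application to Theorem \ref{mainthm} the star--shaped hypothesis is in force, so your route gives the needed estimate there; but if you want the lemma at the stated level of generality you should use \cite{HT} (or an equivalent boundary-trace bound) rather than Rellich.
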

\begin{proof}
We write
\begin{align} \label{lemma1}
\int_{\R} \hat{\rho_T}(t) e^{it/h} \, Tr_{\partial \Omega}E^b_{F,\tilde{\kappa}}(t) \, dt =  \sum_j \rho_T( h_j^{-1} - h^{-1}) \langle \sqrt{F(q)}\tilde{\kappa}(q,D_q,h_j^{-1}) u_{h_j}^{b},  \sqrt{F(q)}\kappa(q,D_q,h_j^{-1}) u_{h_j}^b \rangle^{2}_{L^2(\partial \Omega)} \nonumber \\
= \sum_{j}  \rho_T( h_j^{-1} - h^{-1}) \langle  \sqrt{F(q)}\kappa(q, h_j D_q) u_{h_j}^{b},  \sqrt{F(q)}\kappa(q, h_jD_q) u_{h_j}^b \rangle^{2}_{L^2(\partial \Omega)}.
 \end{align}
 One rescales fiber variables in $\kappa(q,\eta)$ according to the rule $\eta \mapsto h h^{-1}_j \eta$ and makes the Taylor expansion
  \begin{equation} \label{taylor}
  \kappa(q, h_j h^{-1} \eta) = \kappa(q,\eta) + {\mathcal O}_{\mathcal S} ( h_j h^{-1} - 1). \end{equation}
  Since  $ Op_{h_j} (\kappa(q,\eta)) = Op_h ( \kappa(q, h_j h^{-1} \eta)),$ it follows from (\ref{taylor}) and $L^2$-boundedness that the last line in (\ref{lemma1}) equals
\begin{align}\label{lemma1contd}  
  \sum_{j}  \rho_T( h_j^{-1} - h^{-1})   \langle \sqrt{F(q)} \kappa(q, h D_q) u_{h_j}^{b},  \sqrt{F(q)}\kappa(q,hD_q) u_{h_j}^b \rangle^{2}_{L^2(\partial \Omega)} \nonumber \\ + {\mathcal O}(h) \sum_j    |h_j^{-1} - h^{-1}|  \, | \rho_T( h_j^{-1} - h^{-1})  |  \| u_{h_j}^b \|_{L^2(\partial \Omega)}^2. \end{align}
  By \cite{HT} we have that $  \| u_{h_j}^b \|_{L^2(\partial \Omega)}^2 = {\mathcal O}_\Omega (1)$ and so, by leading-order Weyl asymptotics,
   $${\mathcal O}(h) \sum_j    |h_j^{-1} - h^{-1}|  \, | \rho_T( h_j^{-1} - h^{-1})  |  \| u_{h_j}^b \|_{L^2(\partial \Omega)}^2 = {\mathcal O}_{\Omega}(h^{-n+2}).$$
  Since $\rho_T  \in {\mathcal S}(\R),$ it follows that the sum in (\ref{lemma1}) equals
\begin{align} \label{lemma2}
 \sum_{j; |h_j - h| \leq h^{3/2} }  \rho_T( h_j^{-1} - h^{-1}) & \langle  \sqrt{F(q)} \kappa(q, h D_q) \chi_1(q,h D_q) u_{h_j}^{b},  \sqrt{F(q)}\kappa(q, h D_q) \chi_1(q,h D_q) u_{h_j}^b \rangle^{2}_{L^2(\partial \Omega)} \nonumber \\
 & + {\mathcal O}_{\Omega}(h^{-n+2}),\end{align}
 where $\chi_1$ is the cutoff in (\ref{cutoff2}). For $\{j; |h_j - h| \leq h^{3/2} \},$ we insert the estimate
 $$ u_{h_j}^b =  n^D \chi_0(x,h D_x) \phi_{h_j}  + {\mathcal O}(h^{\infty})$$
 in (\ref{lemma2}). Finally, the truncated sum in (\ref{lemma2}) can, modulo ${\mathcal O}(h^{\infty})$ error, be replaced by the sum over all $j \geq 1$  and  so, it follows that the RHS of (\ref{lemma2}) equals
$$ \int_{\R} \hat{\rho_T}(t) e^{it/h} \, Tr_{\partial \Omega} E^b_{F,\kappa}(t,h)  \, dt  + {\mathcal O}_{\Omega}(h^{-n+2}),$$
where $E^b_{F,\kappa}(\cdot;h) \in {\mathcal F}^{0, -\infty}_0((\mathbb{R} \times \partial \Omega) \times \partial \Omega ; \Gamma)$ are the $h$-FIO's in Proposition \ref{mainprop1}.

\end{proof}



In Theorem \ref{mainthm}, we assume that $n=2$ and $d=1.$ From the expansion in Proposi\-tion~\ref{basiclemma}, Lemma \ref{scapproximation} and (\ref{key2}), one gets that 
\begin{align} \label{pretrace}
\sum_j \rho_T(h^{-1}-h_j^{-1}) \|  \sqrt{F(q)}\tilde{\kappa}(q,D_q,h_j^{-1}) u_{h_j}^b \|_{L^2(\partial \Omega)} &= \int_{\R} \hat{\rho_T}(t) e^{it/h} \, Tr_{\partial \Omega}E^b_{F,\kappa}(t,h) \, dt + {\mathcal O}(1) \nonumber   \\
& \hskip -3truecm \sim_{h \to 0^+} (2\pi h)^{-1-\frac{1}{2}} c_{\Lambda_T} \int_{\R} \int_{\R} e^{it/h} \, e^{i (T-t) \tau/h } \, \tilde{\chi}(\tau) \tau^{5/2} \, \hat{\rho_T}(t) \, d\tau dt \nonumber  \\ &  \hskip -3truecm  \sim_{h \to 0}  c_{\Lambda_T} e^{iT/h} h^{-1/2} .  \end{align}
 The last line in (\ref{pretrace}) follows by stationary phase in $(t,\tau) \in \R^2$ and the constant $c_{\Lambda_T}$ is given in Proposition \ref{basiclemma}.
\section{Proof of the Theorem \ref{mainthm}} \label{mainproof}
\subsection{Dirichlet boundary conditions} 
\label{sec:dirichlet}
Recall that 
\begin{equation} \label{counting}
N(\lambda) = \frac{\text{Area}(\Omega)}{4 \pi} \lambda^2 - \frac{\text{Length}(\partial \Omega)}{4\pi} \lambda + R(\lambda),
\end{equation}
Let, as before,   $\rho_T \in S(\R)$ with $\hat{\rho_T} \in C^{\infty}_{0}(\R),$ $\hat{\rho_T}(T) = 1$  and $0 \notin \text{supp} \,  \hat{\rho_T}$.
Combining (\ref{counting}) with the identity
\begin{equation} \label{gen1}
\sum_j \rho_T(\lambda - \lambda_j) = \int_{\R} \rho_T(\lambda - \mu) \, dN(\mu),
\end{equation}
we can rewrite (\ref{gen1})  as
\begin{align} \label{tracedecomp}
\int_{\mathbb{R}} \rho_T(\lambda - \mu) dN(\mu) &= - \frac{\text{Area}(\Omega)}{4 \pi} \int_{\mathbb{R}} \rho_T'(\lambda - \mu)  \, \mu^{2} \ d\mu + \frac{\text{Length}(\partial \Omega)}{4\pi}  \int_{\mathbb{R}} \rho_T'(\lambda - \mu)  \mu \ d\mu  \nonumber \\
 & + \int_{\mathbb{R}} \rho_T(\lambda - \mu) \, dR(\mu).
\end{align}
For the first two terms on the RHS of (\ref{tracedecomp}), we have 
$$\int_{\mathbb{R}} \rho_T'(\lambda - \mu) \ \mu^2 \ d\mu = \int_{\mathbb{R}} \rho_T'(\lambda - \mu)  \mu \ d\mu = 0.$$ This follows since for $m\ge 1$, $\int_{\mathbb{R}} \rho_T'(\lambda - \mu) \ \mu^{m} \ d\mu = (D_s)^m[ \int_{\mathbb{R}}  e^{is\mu} \rho_T'(\lambda - \mu)  \ d\mu] |_{s=0} = (D_s)^m[ e^{-is\lambda}   \widehat{\rho_T'}(s) ] |_{s=0} = 0,$  since $\widehat{\rho_T'}(s) = 0$ for $s$ near $0.$ Thus, from (\ref{gen1}), it follows that 
\begin{equation} \label{guts1}
\sum_{j} \rho_T(\lambda -\lambda_j) = \int_{\mathbb{R}} \rho_T(\lambda - \mu)dR(\mu) = - \int_{\mathbb{R}} \rho_T'( \lambda - \mu) R(\mu) \ d\mu.
\end{equation}


Under the cleanliness  assumption on $\Lambda_T$ and in view of Proposition \ref{basiclemma} and Lemma \ref{scapproximation},
\begin{align} \label{gut2}
\sum_{j} \rho_T(\lambda_j - \lambda) = &\sum_j \rho_T(\lambda-\lambda_j) \langle  \sqrt{F(q)}\tilde{\kappa}(q,D_q,\lambda_j) u_j^b, \sqrt{F(q)} \tilde{\kappa}(q,D_q,\lambda_j) u_j^b \rangle_{L^2(\partial \Omega)}  + O(\lambda^{-\infty}) \nonumber \\
&= \sum_j \rho_T(\lambda-\lambda_j) \langle  \sqrt{F(q)} \kappa(q,\lambda^{-1}D_q) u_j^b,  \sqrt{F(q)}\kappa(q,\lambda^{-1}D_q) u_j^b \rangle_{L^2(\partial \Omega)}  + {\mathcal O}(1)\nonumber \\
  &= \int_\R e^{i\lambda t} \hat{\rho_T}(t) Tr_{\partial \Omega}E^b_{F,\kappa}(t,\lambda^{-1}) \,dt + {\mathcal O}(1)  \nonumber \\
  &= c_{\Lambda_T} e^{iT \lambda} \sqrt{\lambda} + {\mathcal O}(1).\end{align}
Here, $c_{\Lambda_T}$ is a nonzero constant defined by \eqref{CLT}. 
Finally, from the last line of (\ref{gut2}),
\begin{equation} \label{UPSHOT}
- \int_{\R}  \rho_T'(\lambda -\mu) \, R(\mu) \, d\mu = \sum_{j}  \rho_T(\lambda-\lambda_j) \gg\sqrt{\lambda},
\end{equation}
and an application of Lemma \ref{calclemma} completes the proof of Theorem \ref{mainthm} in the Dirichlet case.  
\subsection{Neumann boundary conditions}
\label{Neumann}
In the Neumann case, using a variant of the Rellich identity found in \cite{CTZ}, we obtain:
\begin{equation} \label{rellich/neumann}
2 = \langle F(q) (I + h^2 \Delta_{\partial \Omega}) u^b_h, u_h^b \rangle_{L^2(\partial \Omega)} + \langle R(h) u_h^b, u_h^b \rangle_{L^2(\partial \Omega)}. \end{equation}
Here $\Delta_{\partial \Omega}: C^{\infty}(\partial \Omega) \to C^{\infty}(\partial \Omega)$ is the induced Laplacian on the boundary $\partial \Omega,$  and  the remainder $R(h)$ is an $h$-differential operator of the form $ h a_1(q) (h \partial_{\nu_q} ) +  ha_2(q)(h \partial_{T_q})$ where $T_q$ is unit tangential vector field along $\partial \Omega$ and $a_j \in C^{\infty}(\partial \Omega); j=1,2.$  Since in the  Neumann case $\partial_{\nu_q} \phi_h |_{\partial \Omega} = 0$ and $WF_h( \phi_h|_{\partial \Omega}) \subset B^*\partial \Omega,$ it follows  by $L^2$-boundedness that for planar domains
$ \| R(h) \|_{L^2(\partial \Omega) \to L^2(\partial \Omega)} = O(h)$ and so, 
$$ \sum_{h_j^{-1} \in [ h^{-1}, h^{-1} + 1]} \langle R(h_j) u_{h_j}^b, u_{h_j}^b \rangle_{L^2(\partial \Omega)} = O(1).$$
This is lower-order than the putative leading term  (which is $\sim h^{-1/2}$) and  can be ignored.

The proof of Theorem \ref{mainthm} then follows as in the Dirichlet case with a  minor difference: The multiplicative factor of $\gamma(q,\eta) = \sqrt{1-|\eta|^2}$ gets replaced by $\gamma^{-1}(q,\eta)$ in the symbol formulas in Proposition \ref{mainprop1}. This is due to the fact that in the Dirichlet case the boundary traces involve normal derivatives and  therefore the symbol of the wave parametrix involves an additional factor of $\gamma \times \gamma $ when compared with the Neumann case.  In the last step in (\ref{gut2}), we use the fact that
$\sigma (1+h^2 \Delta_{\partial \Omega})(q,\eta) = \gamma^2(q,\eta)$ to compute the analogue of the constant $c_{\Lambda_T}$. In the Neumann case, it is  equal to
$$c_0^N \int_{\Lambda_T\cup\Lambda_T^{-1}} F(q) \,  \, \sigma(1+h^2\Delta_{\partial \Omega})(q,\eta)  \, \gamma(q,\eta)^{-1} d\mu$$
$$=  c_0^N \int_{\Lambda_T\cup\Lambda_T^{-1}} F(q) \,  \gamma(q,\eta) d\mu, $$
which is again non-zero under the star-shaped assumption on $\Omega$ and the admissibility assumption on $\Lambda_T.$  This completes the proof of Theorem \ref{mainthm}.

\section{Examples} \label{examples}

\subsection{Elliptical billiards} \label{ellipse}
The billiard flow on an ellipse is a classical example of an integrable dynamical system.  Let $\Gamma_{m,n}$ denote the family of periodic orbits with 
$n$ vertices and the winding number $m \le n/2$. It is well-known that the trajectories belonging to each family $\Gamma_{m,n}$ have the same length 
$l(\Gamma_{m,n})=l_{m,n}$ and are tangent to a certain caustic which is either a confocal ellpise or a hyperbola (see \cite{Tab, GM2}). An example of this phenomenon (which is a particular case of the Poncelet porism) is  illustrated by Figure 1. 
\begin{prop} 
\label{ellprop}
Any ellipse satisfies the assumptions of Theorem \ref{mainthm}.
\end{prop}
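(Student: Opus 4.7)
The plan is to exhibit a specific family $\Lambda_T = \Lambda_{m,n} \subset B^*\partial\Omega$ of periodic points of $\beta^n$ corresponding to one of the $\Gamma_{m,n}$ families, and verify the three admissibility conditions using the integrability of the elliptical billiard. Since $\Omega$ is smooth, strictly convex, and star-shaped with respect to its center, the hypotheses on the domain itself are immediate; we focus on admissibility. Throughout, I work in action--angle (Liouville) coordinates $(I,\theta)$ on the open cylinder $B^*\partial\Omega\setminus \Sigma$ (where $\Sigma$ is the separatrix corresponding to trajectories through the foci), in which the billiard ball map takes the twist form $\beta(I,\theta)=(I,\theta+\omega(I))$; the invariant circles $\{I=\text{const}\}$ are indexed by the caustic parameter.

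To verify cleanliness (ii) and separation from the glancing set (iii), I would proceed as follows. For condition (iii), the glancing set $S^*\partial\Omega$ corresponds, in the caustic parametrization, to the limiting value at which the confocal caustic degenerates onto $\partial\Omega$ itself; the invariant circle $\Lambda_{m,n}$ on which the rotation number equals $2\pi m/n$ corresponds to a strictly interior caustic parameter, hence lies in the interior of $B^*\partial\Omega$. For condition (ii), observe that $\beta^n(I,\theta)=(I,\theta+n\omega(I))$ and $\Lambda_{m,n}=\{I=I_{m,n}\}$ with $\omega(I_{m,n})=2\pi m/n$, so
\begin{equation*}
d\beta^n-\mathrm{Id} = \begin{pmatrix} 0 & 0 \\ n\omega'(I_{m,n}) & 0 \end{pmatrix}.
\end{equation*}
The twist condition $\omega'(I)\neq 0$ for the elliptical billiard (away from the separatrix) follows from the Lazutkin / Abel--Jacobi formula expressing $\omega(I)$ as a ratio of complete elliptic integrals in the caustic parameter, which is strictly monotone. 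Thus $\ker(d\beta^n-\mathrm{Id})=\mathrm{span}(\partial_\theta)=T\Lambda_{m,n}$, proving cleanliness. The simplifying assumption \eqref{simplify} comes for free: any trajectory in $\Gamma_{m,n}$ with $n\geq 3$ strikes the boundary obliquely (a conormal bounce would force $n=2$).

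The main obstacle is isolation in the length spectrum (i). The length of every orbit in $\Gamma_{m,n}$ equals a common value $\ell_{m,n}$ that can be written explicitly as an elliptic integral (in fact $\ell_{m,n}=n\cdot s(I_{m,n})$ for a real analytic function $s$ on the action cylinder, essentially the Mather $\beta$-function of the twist map). My approach is to argue that the countable set $\{\ell_{m,n}\}_{(m,n)}$ is discrete in $\mathbb{R}$ except for accumulation at the two endpoints of the allowed length range (the limit $n/m\to\infty$ pushes caustics toward $\partial\Omega$ with $\ell_{m,n}\to |\partial\Omega|$, and the opposite limit produces accumulation near multiples of the bouncing-ball lengths $2a,2b$). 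For any fixed small $n$ (say $n=3$, the confocally-caustic triangles), $\ell_{m,n}$ is bounded away from every other periodic length: the $n=2$ orbits are the two isolated axis orbits $2a,2b$, and for $n'\geq 4$ one can check via the monotonicity and explicit asymptotics of the elliptic-integral length function that no further $\ell_{m',n'}$ falls in a small neighbourhood of $\ell_{1,3}$ (one may need to exclude at most countably many degenerate aspect ratios of the ellipse, or simply pick a different $(m,n)$ for which isolation is verified directly). Choosing such an $(m,n)$ supplies the admissible family and completes the proof.
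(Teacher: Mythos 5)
Your verifications of conditions (ii) and (iii) are sound, and in fact your route to cleanliness is a nice, more explicit alternative to what the paper does: you pass to action--angle coordinates and reduce cleanliness to the twist condition $\omega'(I)\neq 0$, whereas the paper simply cites \cite[Proposition 4.3]{GM2}. Your twist-condition argument does need a reference or a short proof that $\omega'$ is nonvanishing on the relevant part of the cylinder (this is standard for elliptical billiards, so it is a citation issue rather than a gap). One small side note: you invoke the simplifying assumption \eqref{simplify}, but that assumption belongs only to the heuristic outline in Section 1.4 and is not part of the hypotheses of Theorem \ref{mainthm}, so it is not something you need to verify.

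The real gap is in condition (i), isolation in the length spectrum, which the paper flags as the hardest condition. You write that ``one can check via the monotonicity and explicit asymptotics of the elliptic-integral length function that no further $\ell_{m',n'}$ falls in a small neighbourhood of $\ell_{1,3}$,'' and then hedge with ``one may need to exclude at most countably many degenerate aspect ratios of the ellipse, or simply pick a different $(m,n)$ for which isolation is verified directly.'' This is precisely the step that cannot be hand-waved: Proposition \ref{ellprop} asserts the conclusion for \emph{every} ellipse, so excluding ``degenerate aspect ratios'' is not permitted, and ``pick a different $(m,n)$'' requires an argument that some admissible $(m,n)$ exists for each ellipse. The paper closes this gap with a short but nontrivial lemma: Birkhoff's theorem gives that $l_{1,n}$ is strictly increasing in $n$ and tends to the perimeter $L$, while two of Amiran's estimates \cite[Props.\ 4, 5]{Am} show that for $m\ge 2$ only finitely many families $\Gamma_{m,n}$ can have length below $L$. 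Together these force all but finitely many of the $l_{1,n}$ to be isolated in the length spectrum, supplying the needed family for \emph{any} ellipse. Without something quantitative of this kind (or an equivalent control on how the $\ell_{m,n}$ can cluster below $L$), your argument does not establish condition (i).
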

\begin{proof}
Clearly, each family $\Gamma_{m,n}$ is separated 
from the glancing set.  Moreover, it was shown in \cite[Proposition 4.3]{GM2} that the fixed point set of the iterated billiard ball map $\beta^k$ is clean for any $k$. Therefore, the second and  the third condition of  Definition \ref{microclean} are satisfied by each family $\Gamma_{m,n}$. 
\begin{figure}
\centering
\includegraphics[height=5cm]{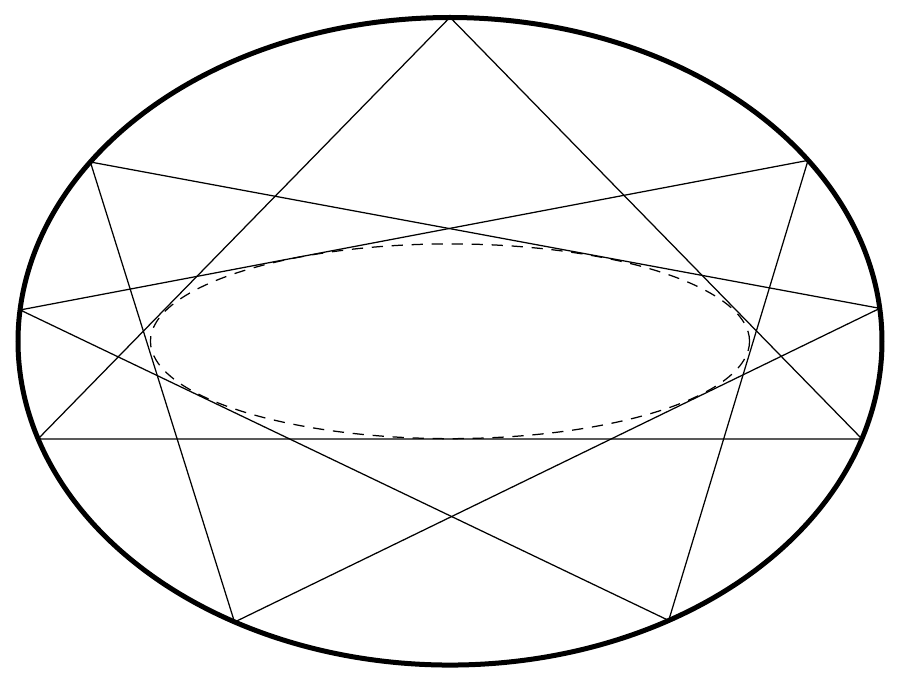}
\caption{Periodic orbits on an ellipse from  the family $\Gamma_{1,3}$.}
\end{figure}
In order to prove that the ellipses satisfy  the assumptions of Theorem \ref{mainthm}, it suffices to show that the first condition  of Definition \ref{microclean} holds for at  least  one pair $m,n$.  This follows from the lemma below. 
\begin{lem}
Let $\Omega$ be an ellipse. For all $n\in \mathbb{N}$ with possibly finitely many exceptions,  $l_{1,n}$ is an isolated point in the length spectrum of $\Omega$,
and $l_{1,n}=l_{m,n'}$ iff $m=1$ and $n=n'$.
\end{lem}
\begin{proof} Let $L=L(\partial \Omega)$ be the perimeter of $\Omega$. 
It follows from the Birkhoff theorem (see \cite[Theorem 2.1]{Sib})  that the sequence $l_{1,n}$ is strictly monotone increasing in $n$ and converging to $L$.
At the same time, by \cite[Proposition 5]{Am}, there exists a number $M$ such that for all $n$ and all $m$ satisfying  $M< m \le \frac{n}{2}$,  one has $l_{m,n}>L+1$.
Moreover, by \cite[Proposition 4]{Am}, for each $m=2,\dots,M,$ there exists $N_m$, such that for all $n>N_m$ and $n\ge 2m$,  
$l_{m,n}>mL- \frac{L}{3}>L+\frac{L}{3}$.
Therefore, for $m\ge 2$, there is possibly a finite number of families of trajectories with lengths smaller than $L$.
Since  $ l_{1,n}<L$ for any $n \ge 2$  and $l_{1,n}$ is strictly monotone increasing in $n$, we deduce that for all but possibly finitely many $n$,  
the number  $l_{1,n}$ is an isolated point in the length spectrum and 
$l_{1,n} =l_{m,n'}$ iff $m=1$ and $n=n'$. 
This completes the proof of the lemma.
\end{proof}
Set now $T=l_{1,n'}$ and consider the fixed point set of the iterated billiard ball map $\beta^{n'}$,  corresponding to the family
$\tilde \Lambda_T=\Gamma_{1,n'}$.  As was shown above, it satisfies the assumptions of Theorem \ref{mainthm}. This completes the proof of the proposition.
\end{proof}
\begin{rem} The eigenvalue counting problem on an ellipse  can be reduced to a lattice counting problem.  As a consequence,  
one has a van der Corput-type remainder estimate  $R(\lambda)=O(\lambda^{2/3})$ (\cite{Kuz},  see also \cite{KF, CdV} for this result on a disk).
To our knowledge, the lower bound \eqref{main:bound} is new even for a disk. We conjecture that it is sharp (on a polynomial scale) for any ellipse, and that the 
optimal upper bound should be the same as in Hardy's conjecture for the Gauss's circle problem: $R(\lambda)=O\left(\lambda^{\frac{1}{2}+\varepsilon}\right)$
for any $\varepsilon>0$.
\end{rem}

\subsection{Domains of constant width} 
Let $\Omega$ be a domain of constant width.  The billiard ball map on $\Omega$ has a one-parameter family of closed trajectories formed by
the bouncing ball orbits at each point $q\in \partial \Omega$ (see \cite{Tab}).  While many domains of constant width  are piecewise smooth (like the Reuleaux polygons and  other domains of constant width  whose boundary is a union of cicrular arcs), there exist a large number of smooth domains of constant width.
\begin{figure}
\centering
\includegraphics[height=5cm]{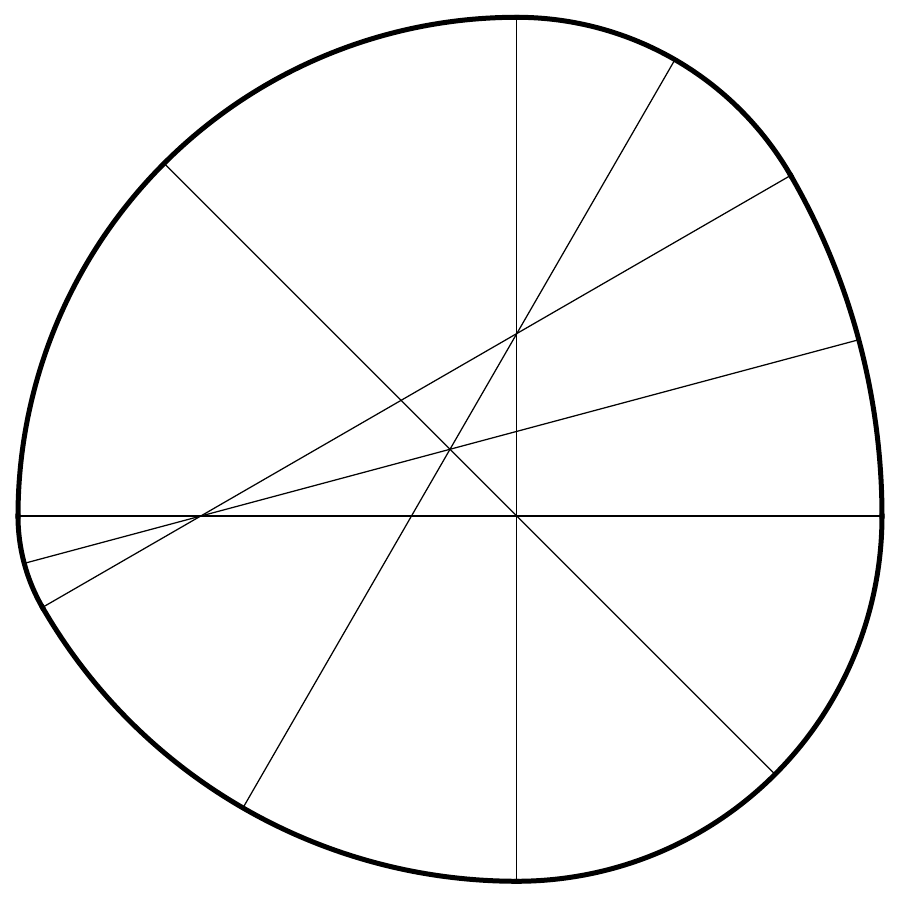}
\caption{Bouncing ball orbits on a domain of constant width.}
\end{figure}
Indeed,  by an arbitrarily small perturbation,  any domain of constant width  could be transformed into a smooth  (and even analytic) domain of the same 
constant width (see \cite{Weg} and references therein).
\begin{prop}
\label{cwidth}
Any smooth domain of constant width satisfies the assumptions of Theorem \ref{mainthm}.
\end{prop}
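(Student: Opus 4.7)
My plan is to verify the three admissibility conditions of Definition~\ref{microclean} for the one-parameter family $\tilde{\Lambda}_T$ of bouncing-ball orbits on $\Omega$. In a smooth domain of constant width $w$, the bouncing-ball orbits are precisely the double traversals of the diameters of $\Omega$; since every diameter has length $w$, the common period is $T = 2w$, and $\Lambda_T \subset B^*\partial \Omega$ is a smooth circle of fixed points of $\beta^2$ lying entirely in the zero section $0_{B^*\partial \Omega}$. Note that any constant-width domain is convex and hence star-shaped, so the remaining hypothesis of Theorem~\ref{mainthm} is automatic.

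Condition (iii) is immediate, since $\eta = 0$ for every $(q,\eta) \in \Lambda_T$, so $\Lambda_T$ is maximally separated from the glancing set $S^*\partial \Omega$. For condition (ii), I would compute $d\beta^2$ at an arbitrary point $(q,0) \in \Lambda_T$ by integrating the Jacobi equation along the diameter from $q$ to its antipode $q^*$. The result is a $2 \times 2$ unimodular matrix whose entries are explicit rational expressions in the two radii of curvature $\rho(q), \rho(q^*)$ and the chord length $w$. The classical constant-width identity $\rho(q) + \rho(q^*) = w$, a consequence of the support-function relation $h(\theta) + h(\theta+\pi) = w$, then forces $d\beta^2 - I$ to have rank exactly one, with kernel spanned by the tangent direction to $\Lambda_T$; this is cleanliness. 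As a sanity check, on the disk of radius $w/2$ the formula reduces to $d\beta^2 = \left(\begin{smallmatrix} 1 & -2w \\ 0 & 1 \end{smallmatrix}\right)$ in arc-length and momentum coordinates, which has exactly the expected rank-one deficiency.

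The main obstacle is condition (i), the isolation of $T = 2w$ in the length spectrum. The case $k=2$ is painless: every period-$2$ orbit is a chord orthogonal to $\partial \Omega$ at both endpoints, hence a diameter of length $w$ by the constant-width hypothesis, so every such orbit already lies in $\tilde{\Lambda}_T \cup \tilde{\Lambda}_T^{-1}$. For $k \geq 3$ I would combine the upper bound ``each side $\leq w$'' with an asymptotic lower bound: as $k \to \infty$ any $k$-periodic orbit must accumulate on $\partial \Omega$, and its total length approaches $m \cdot L(\partial \Omega) = m \pi w$ (using Barbier's theorem $L(\partial \Omega) = \pi w$) for its positive integer winding number $m$. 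Since $m\pi w$ is bounded away from $2w$ for every $m \geq 1$, only finitely many values of $k$ can contribute lengths near $2w$. For these remaining finitely many $k$, the set of $k$-periodic orbits is compact, so the length function takes only finitely many values.

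The subtlest step, which I expect to be the real obstacle, is ruling out the existence of a $k$-periodic orbit of length exactly $2w$ for these small $k \geq 3$, in particular for $k=3$. I would approach this by a quantitative convex-geometric argument specific to the constant-width setting: for an inscribed polygon $q_1, \ldots, q_k$ satisfying the reflection law, one can project the sides onto suitable diameters and use the identity $h(\theta) + h(\theta+\pi) = w$ to deduce that the perimeter of any such polygon with $k \geq 3$ strictly exceeds $2w$ by a positive amount depending on $\rho_{\min} > 0$ and $\Omega$. Together with the verification of (ii) and (iii) above, this establishes the admissibility of $\tilde{\Lambda}_T$ and hence the applicability of Theorem~\ref{mainthm}.
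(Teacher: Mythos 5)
Your handling of conditions (ii) and (iii) is correct and matches the paper's in substance: you propose to compute $d\beta^2$ at a bouncing-ball point and use the constant-width relation $\rho(q)+\rho(q^*)=w$ to show the kernel of $d\beta^2 - I$ is exactly the tangent line to $\Lambda_T$, while the paper simply cites the explicit matrix $D\beta^2_{(q,0)}=\left(\begin{smallmatrix}1 & -2WR_q/(W-R_q)\\ 0 & 1\end{smallmatrix}\right)$ from Oliffson Kamphorst--Pinto-de-Carvalho and notes $W-R_q=R_p>0$. Your asymptotic argument for large $k$ (lengths tend to $m\pi w$ via Barbier, so lengths near $2w$ force bounded $m$ and $k$) is also in the same spirit as the paper's use of Amiran's Propositions~4 and~5 followed by a compactness/limit argument.

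However, there is a genuine gap at exactly the spot you flag: ruling out a $k$-periodic orbit with $k\ge 3$ of length $\le 2w+\delta$, particularly the triangular case $k=3$. Your proposed ``project the sides onto diameters and use $h(\theta)+h(\theta+\pi)=w$'' is not carried out, and it is not clear it closes: a perimeter lower bound for arbitrary inscribed polygons is much weaker than a perimeter lower bound for \emph{billiard} polygons (the shortest inscribed triangle in a constant-width body is typically far shorter than $2w$, so the reflection condition must be used essentially). The paper resolves this with the Bezdek--Bezdek results: \cite[Theorem 1.1]{BB} shows the shortest periodic billiard orbit in a convex domain is either a bouncing-ball chord or a triangle; for a candidate triangle $ABC$ one then forms the disk-polygon $\mathcal{P}$ equal to the intersection of the three disks of radius $W$ centered at the antipodes $A', B', C'$, observes $\Omega\subset\mathcal{P}$ so $\mathrm{width}(\mathcal{P})\ge W$, notes $ABC$ is still a billiard orbit in $\mathcal{P}$, and applies \cite[Theorem 1.2]{BB} to conclude the triangle has length strictly greater than $2\,\mathrm{width}(\mathcal{P})\ge 2W$. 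This establishes the bouncing-ball orbit as the unique shortest type, and the compactness argument then yields the length gap. Without this (or an equivalent) input, your condition~(i) is unproven. A smaller issue: ``the set of $k$-periodic orbits is compact, so the length function takes only finitely many values'' is not a valid inference; what the paper actually proves is that a sequence of non-bouncing-ball orbits with lengths tending to $2W$ would subconverge to a period-$2W$ orbit, which must be a bouncing ball, forcing the approximating orbits to be bouncing-ball orbits as well.
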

\begin{proof}
Let $\Omega$ be  a smooth domain of constant width $W$. The set  $\Lambda_{2W}=\{(q,\eta)|\, q \in \partial \Omega, \eta=0\}$ is a fixed point set of the iterated billiard ball map $\beta^2$ corresponding to the bouncing ball orbits. It is a one-dimensional submanifold of $B^*{\partial \Omega}$.
Let us show that $\Lambda_{2W}$ is a clean fixed point set of $\beta^2$ in the sense of Definition \ref{clean}. The differential of $\beta^2$ was computed in \cite[p. 3]{KC}. We have:
\begin{equation} 
\label{ww}
D \beta^2_{(q, 0)} =
\begin{pmatrix}
1 & -\frac{2 W R_q}{W-R_q} \\
0 & 1
\end{pmatrix},
\end{equation}
where $W$ is the width of $\Omega$ and $R_q$ is the curvature radius of $\partial \Omega$  at the point $q$. Note that for a domain of constant width, 
$W=R_q+R_p$, where $p$ is diametrically opposite to $q$.  The fixed point set of 
$D\beta^2$ is given by $\eta=0$. Hence,  $\Lambda_{2W}$ is clean and the assumption (ii) of Definition \ref{microclean} is satisfied.  The assumption (iii) is 
 satisfied as well, since  the bouncing ball orbits are away from the glancing set. The validity of the assumption (i) follows from the lemma below, which 
completes the proof of Proposition \ref{cwidth}.
\end{proof}
\begin{lem}
Let $\Omega$ be a domain of constant width $W$. Then the bouncing ball orbits are the only shortest periodic orbits in $\Omega$. Moreover, there exists $\delta>0$ such that any other periodic orbit has length at least $2W+\delta$.
\end{lem}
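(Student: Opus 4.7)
The plan is to prove two claims which together imply the lemma: (1) every period-$2$ orbit in $\Omega$ is a bouncing ball of length exactly $2W$; (2) every periodic orbit with three or more vertices has length at least $2W+\delta$ for some uniform $\delta>0$. Claim (1) is immediate: a period-$2$ orbit must meet $\partial\Omega$ perpendicularly at both endpoints (a double normal), since the equal-angle reflection law forces both angles of incidence to vanish. In a constant-width domain every double normal realises the distance between a pair of parallel supporting lines of $\Omega$ and therefore has length exactly $W$, so every period-$2$ orbit has length $2W$.

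For (2) I would use the Aubry--Mather framework for the billiard ball map $\beta$. Every primitive periodic orbit carries a rotation number $\omega = p/q \in (0,1/2]$, with $\omega = 1/2$ (and $q=2$) corresponding precisely to bouncing balls. Let $L(p/q)$ denote the infimum of lengths of orbits with primitive rotation $p/q$, and set $\psi(p/q):=L(p/q)/q$; this average-action function extends continuously to a concave function $\psi:[0,1/2]\to[0,W]$ with $\psi(0)=0$ and $\psi(1/2)=W$ (the Mather $\beta$-function). By concavity, $\psi(\omega)\ge 2W\omega$ on $[0,1/2]$, whence $L(p/q) = q\,\psi(p/q) \ge 2Wp \ge 2W$. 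Two additional inputs promote this to a strict inequality with a uniform gap. First, by Barbier's theorem the perimeter of $\partial\Omega$ equals $\pi W$, and the whispering-gallery limit $L(1/q) \to \pi W$ as $q \to \infty$ gives $\psi'(0^+) = \pi W > 2W$. Second, the twist computation in \eqref{ww} shows that $D\beta^2_{(q,0)}$ has non-zero off-diagonal entry $-2WR_q/(W-R_q)$, so $\beta^2$ is a non-degenerate twist map at each bouncing ball fixed point; classical Aubry--Mather theory then upgrades this local non-degeneracy into strict concavity of $\psi$ on $(0,1/2)$. Consequently $\psi(\omega)/\omega > 2W$ on $(0,1/2)$, and $L(p/q) > 2W$ strictly for all primitive $(p,q) \ne (1,2)$. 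For the uniform gap: if $p \ge 2$ then $L \ge 4W$; and if $p = 1$ with $q \ge 3$, then the monotonicity of $\omega \mapsto \psi(\omega)/\omega$ (a consequence of concavity and $\psi(0)=0$) gives $L = \psi(1/q)/(1/q) \ge \psi(1/3)/(1/3) > 2W$. Setting $\delta := \min\bigl\{2W,\,\psi(1/3)/(1/3) - 2W\bigr\}$ concludes the argument.

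The main obstacle is the strict concavity of $\psi$ near $\omega = 1/2$, which is the quantitative expression of the isolation of the bouncing-ball family within the length spectrum. The non-vanishing twist coefficient in \eqref{ww} supplies the required local non-degeneracy of $\beta^2$ at each bouncing ball fixed point, and standard Aubry--Mather theory then upgrades this into the global strict concavity of $\psi$ on $(0,1/2)$ and hence the uniform gap $\delta$ in the lemma.
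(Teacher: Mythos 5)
Your step (1) is fine and is essentially the geometric part of the paper's argument: a period-$2$ orbit is a double normal and hence a diameter of length $W$ in a constant-width domain. The Aubry--Mather argument in step (2), however, has a genuine gap that breaks the proof.

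The problem is with the claim that the function $\psi(p/q):=L(p/q)/q$, where $L(p/q)$ is the \emph{infimum} of lengths of $p/q$-orbits, coincides with the Mather $\beta$-function and is concave. For convex billiards the generating function of the monotone twist map is $h(s,s')=-\ell(s,s')$ (negative chord length; one checks $\partial_1\partial_2 h<0$), so Mather's $\beta(\omega)$ equals $-\sup$ of the average chord length over $p/q$-orbits. Its negative, the concave quantity you want, is therefore the average length of the Birkhoff (length-\emph{maximizing}) orbit, not the infimum over all $p/q$-orbits. The general bound one gets from the definition --- that every invariant measure has action at least $\beta(\omega)$ --- translates, after the sign, into the trivial statement that average length is at most the Birkhoff value. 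In particular, the inequality $\psi(\omega)\ge 2W\omega$ you derive from concavity holds only for the Birkhoff orbit of rotation number $\omega$ and gives no lower bound whatsoever for the other $p/q$-orbits (e.g.\ the minimax/Fagnano-type ones), which is precisely what the lemma needs. Two secondary issues: the assertion ``$p\ge 2\Rightarrow L\ge 4W$'' is stated without justification and does not follow from the (already inapplicable) concavity; and the step where a non-degenerate twist at a single fixed circle is said to ``upgrade to global strict concavity of $\psi$ on $(0,1/2)$'' by ``classical Aubry--Mather theory'' is not a theorem I can recognize --- strict convexity of $\beta$ at rational rotation numbers is a subtle matter, and in any case local twist non-degeneracy at $\omega=1/2$ would at best control behavior near $1/2$, not globally.

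By contrast, the paper proves the lemma by entirely different, more hands-on means: it cites the Bezdek--Bezdek result that shortest periodic orbits in a constant-width domain are either bouncing balls or triangles, uses an auxiliary disk-polygon to show every triangular orbit has length strictly greater than $2W$, and then obtains the uniform gap $\delta$ by a compactness argument, bounding the rotation number and period of a putative low-length sequence via Amiran's estimates for strictly convex domains and passing to a limiting orbit, which must then itself be a bouncing ball. That route controls \emph{all} periodic orbits rather than just the length-maximizing ones; your Aubry--Mather route, as written, does not.
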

\begin{proof}
It follows from \cite[Theorem 1.1]{BB} that any shortest periodic orbit on $\Omega$ is either a bouncing ball orbit or a triangular orbit.  Let us show that any triangular orbit $ABC$ has length greater than $2W$.  Our proof follows closely the ideas of \cite{BB}.  Let $A'$, $B'$, $C'$ be the diametrically opposite points  to, respectively, $A$, $B$, $C$. Consider a disk-polygon $\mathcal{P}$  which is equal to  the intersection of the disks of radius $W$ centered at $A'$, $B'$ and $C'$. It is easy to see that $ABC$ is a periodic billiard orbit for $\mathcal{P}$ as well. Moreover, $\Omega \subset \mathcal{P}$  and hence the width $W'$ of $\mathcal{P}$ is greater or equal 
than $W$.   By \cite[Theorem 1.2]{BB},  the orbit $ABC$ (as an orbit in the disk-polygon $\mathcal{P}$) has length greater than $2W'$, and hence greater than $2W$.

 Recall the notation of subsection \ref{ellipse}: let $\Gamma_{m,n}$  be the collection of orbits $\gamma$ that hit the boundary $n$ times and have 
the winding number $m$.  For any periodic orbit $\gamma$, let  $l(\gamma)$ be its length. 
Now, suppose there exists a sequence of periodic orbits $\gamma_k \in \Gamma_{m_k,n_k}$ such that $\l(\gamma_k) \to 2W$ as $k \to \infty$.  
Since any domain of constant width is strictly convex, one can apply \cite[Proposition 5]{Am}, which immediately implies that the sequence $m_k$ is bounded.
Therefore, for some $m_0$, one may choose a subsequence $\gamma_{k_j} \in \Gamma_{m_0,n_{k_j}}$ such that $\l(\gamma_{n_{k_j}}) \to 2W$ as $j \to \infty$. 
By \cite[Proposition 4]{Am} we get that the sequence $n_{k_j}$ is also bounded, since otherwise 
$\lim_{j\to\infty} l(\gamma_{n_{k_j}})= m_0L(\partial \Omega) > 2W.$  Therefore, for some $n_0$,  there exists a sequence of orbits $\gamma_i \in \Gamma_{m_0,n_0}$ 
such that $\lim_{i\to\infty} l(\gamma_i) = 2W$. The closure of the set $\Gamma_{m_0,n_0}$ is 
a compact set. Hence, the sequence $\gamma_i$ has a limit which  is itself a periodic orbit which hits the boundary at most $n_0$ times, has the 
winding number $m_0$ and length $2W$. As was shown above, it must be  a bouncing ball orbit. This implies that $m_0=1$ and $n_0=2$, since  the only periodic orbits that may converge to a bouncing ball orbit are the bouncing ball orbits.
This completes the proof of the lemma.
\end{proof}
\begin{rem}
In \cite{PT} we aim to extend the inequality \eqref{main:bound} to piecewise smooth domains of constant width with $C^{1,1}$ boundary.  
It is a challenging problem to find an approach  covering  the case of Reuleaux polygons.
Some other questions related to spectral asymptotics on domains of constant width have been recently considered in \cite{Gut, Hor}.
\end{rem}
\subsection{Balls in $\mathbb{R}^n$}
\label{balls}
As was mentioned in the introduction, for most domains of dimension $n \ge 3$,  Theorem \ref{higher} yields  a better lower bound on the error term in Weyl's law than the multi-dimensional analogue of Theorem \ref{mainthm} given by \eqref{dyn:higher}. However,  as we prove  below, this is not the case for Euclidean balls.
\begin{figure}
\centering
\includegraphics[height=5cm]{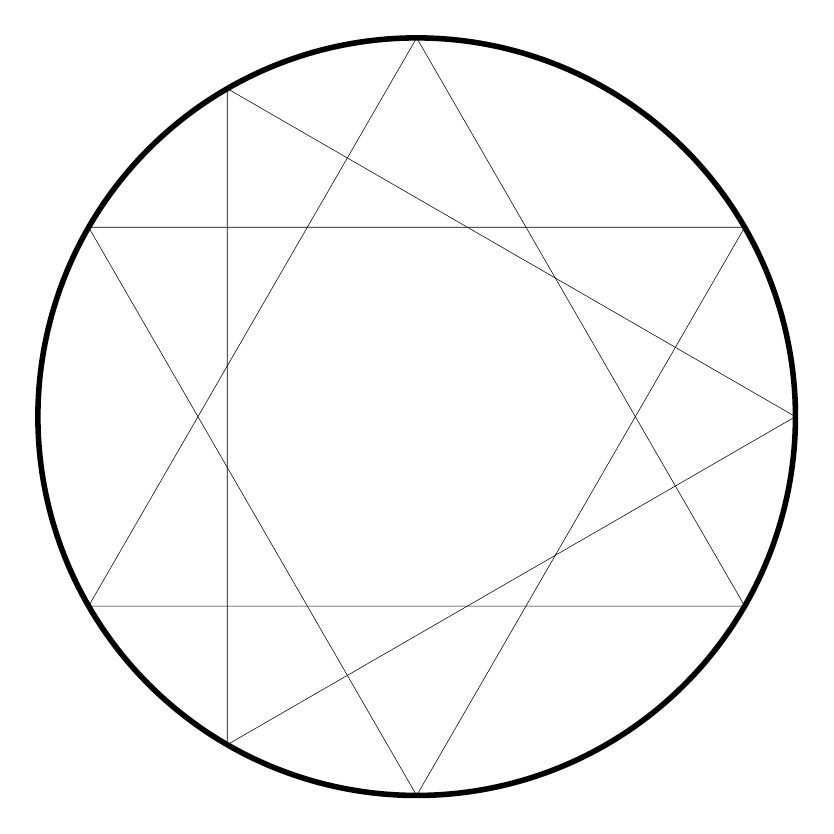}
\caption{Triangular periodic orbits in a $2$-dimensional section of a ball.}
\end{figure}

\noindent {\it Proof of Proposition \ref{ball:bound}.}
In order to check that the proof of Theorem \ref{mainthm} generalizes to the $n$--dimensional ball, one needs to verify  the analogues of the three assumptions stated in Definition \ref{microclean}.   It is well known that each billiard trajectory in a Euclidean ball is contained in a two-dimensional plane passing through the center of the ball. Therefore,
the periodic billiard trajectories are the same as for the circular billiard. For any $k\ge 3$, consider the fixed point set of the iterated billiard ball map $\beta^k$ 
given by the regular $k$-gons inscribed in all possible two-dimensional sections of the ball.  
The dimension of this fixed-point set is equal to  $2n-3$ \cite{Bab, FT}.  Indeed, for each $2$-plane we get a one-dimensional family of such trajectories,  and  the dimension of the corresponding Stiefel manifold $V_2(\mathbb{R}^n)$ is equal to  $2n-3$.  This fixed point set  is a  critical manifold of the corresponding length function, which is nondegenerate in the sense of Bott \cite[Proposition 5.1]{FT}, and hence the fixed point set is clean (cf. \cite[Proposition 4.6]{SZ}).   Therefore, the analogue of condition (ii) in Definition \ref{microclean} is satisfied. Condition (iii) is also trivially satisfied, as the trajectories considered above stay 
away from the glancing set. It remains to check that there exists $k \ge 3$ such that the length of the corresponding periodic trajectories 
is isolated in the length spectrum in the sense of Definition \ref{microclean}.  In subsection \ref{ellipse} we proved this for any ellipse, so in particular it holds for a circle.  In fact, using explicit formulas for the lengths of periodic trajectories on a circle, it could be shown  that one can take simply $k=3$.
Therefore,  formula \eqref{dyn:higher} holds with $d=2n-3$. This completes the proof of the proposition.
\qed

\end{document}